\pgfplotsset{compat=1.18}
\pgfplotsset{plot coordinates/math parser=false}
\def\cT{{\mathcal{T}}}
\def\Td{{\mathbb{T}^d}}
\def\softd{{\leavevmode\setbox1=\hbox{d}%
    \hbox to 1.05\wd1{d\kern-0.4ex{\char039}\hss}}}%cstocs
\newcommand{\ddt}{\partial_t}
\newcommand{\rT}{\mathbb{T}}
\newcommand{\brho}{\bar \rho}
\newcommand{\bc}{\bar c}
\newcommand{\ddx}{\partial_x}
\newcommand{\ddy}{\partial_y}
\newcommand {\startenv} {\vskip 0.05em
\begin{tabular}{||l}\parbox[t]{0.95\linewidth}}
\newcommand {\stopenv} {\end{tabular}\vskip 0.05em}
\newcommand{\F}{\mathcal{F}}
\providecommand{\keywords}[1]{\textit{Keywords:} #1}
\providecommand{\msc}[1]{\textit{2020 MSC:} #1}
\newtheorem{theorem}{Theorem}[section]
\theoremstyle{definition}
\newtheorem{lemma}[theorem]{Lemma}
\theoremstyle{remark}
\newtheorem{remark}[theorem]{Remark}
\title{A posteriori error analysis of a positivity preserving scheme for the power-law diffusion Keller-Segel model}
\author{
  Jan Giesselmann$^{1}$ \and
  Niklas Kolbe$^2$\footnote{Corresponding author, email: {\tt kolbe@igpm.rwth-aachen.de}}
}
\date{
  \small
    $^1$Department of Mathematics,\\ Technical University Darmstadt, Dolivostraße 15,\\ 64293 Darmstadt, Germany\\[5pt]
  $^2$Institute of Geometry and Practical Mathematics,\\ RWTH Aachen University, Templergraben 55,\\ 52062 Aachen, Germany\\
   \smallskip
   \today
}
\begin{document}

\maketitle

\abstract{We study a finite volume scheme approximating a parabolic-elliptic Keller-Segel system with power law diffusion with exponent $\gamma \in [1,3]$ and periodic boundary conditions. We derive conditional a posteriori bounds for the error measured in the $L^\infty(0,T;H^1(\Omega))$ norm for the chemoattractant and by a quasi-norm-like quantity for the density. These results are based on stability estimates and suitable conforming reconstructions of the numerical solution.
We perform numerical experiments showing that our error bounds are linear in mesh width and elucidating the behaviour of the error estimator under changes of $\gamma$.
}

\keywords{Keller-Segel; chemotaxis; nonlinear diffusion; finite volume scheme; a posteriori error analysis}

\msc{Primary 65M15,
Secondary 65M08, 35K40}

\section{Introduction}
The Keller-Segel system is the prototype of a large class of non-linear aggregation diffusion equations that is ubiquitous in continuum models of populations occurring, e.g., in mathematical biology, gravitational collapse and statistical mechanics, see~\cite{Carrillo2022}. The original parabolic-elliptic Keller-Segel system has been proposed as a model for chemotactic movement of bacteria under the assumption that the chemoattractant diffuses much faster than the bacteria. An extensive overview on the history and basic properties of this model can be found in~\cite{Horstmann,Arumgam2021}. One of its specific properties is that the solution may blow-up, i.e., all mass may concentrate in one point, in finite time~\cite{jager1992explossolutsystem}. This has motivated a plethora of analytical and numerical studies investigating the relation between initial data and blow-up. Introducing nonlinear diffusion in the system has been one of various proposed model refinements preventing blow-up, this modification moreover allows for a biological interpretation as volume filling effect during cell migration~\cite{calvez2006volumkellersegel}. Among many other applications, the Keller-Segel model has been extensively used as an essential component in the bio-medical modeling of tumor invasion of tissue, see e.g.~\cite{chaplain2005mathemmodelcancer, kolbe2021model}, in which different kinds of migration enter as well as the interplay of cells with the extracellular matrix and enzymatic activators.
One feature shared by the original model, its regularizations and models in applications is that 
%These regularization and application models share the feature with the original system that
solutions
develop 'spiky' highly localized densities, which makes the development and analysis of numerical schemes a challenging task.

There is a strong interest in developing and analyzing numerical methods for the Keller-Segel system. A main goal has been to derive schemes that retain structural properties of the PDE system such as non-negativity of the solution, conservation of mass, entropy dissipation or that preserve the asymptotic behavior in the parabolic to elliptic limit. Let us mention certain seminal and recent works. A temporal semi-discretisation that preserves positivity and is compatible with the parabolic to elliptic limit can be found in \cite{Liu2018}. A positivity and mass conservative upwind finite-element scheme was investigated in \cite{Saito2007} and error estimates were proven under the assumption that a strong solution exists. More recently, error estimates for a positivity preserving finite element scheme were derived in~\cite{Chen2022}.

Furthermore, discontinuous Galerkin (DG) methods have been used to solve Keller-Segel equations: A family of interior penalty semi-discrete discontinuous Galerkin methods for Keller-Segel equations was derived in~\cite{Epshteyn2009a} and error estimates were proven provided exact solutions are sufficiently regular. These results were extended to a fully discrete scheme in~\cite{Epshteyn2009b}. The local discontinuous Galerkin (LDG) method was applied to the 2D Keller-Segel model in~\cite{Li2017} and optimal error estimates were proven for smooth solutions. Subsequently, another LDG scheme was introduced that can be proven to be energy dissipative~\cite{Li2019}. A high order hybrid finite-volume finite-difference method was derived in~\cite{Chertock2018} and a dedicated scheme for the three dimensional case was derived in~\cite{Epshteyn2019}.

Another important class of numerical methods for Keller-Segel models are finite volume schemes for which positivity can be preserved easily. A finite volume scheme was derived and analyzed in \cite{Filbet2006} and can be shown to converge to a weak solution for sufficiently small initial data. Another finite volume scheme was proposed in \cite{Zhou2017} and error estimates were proven for sufficiently regular exact solutions.
 
This discussion shows that there has been a significant amount of work on deriving novel schemes and proving a priori error estimates that are valid as long as rather smooth exact solutions exist, see also the overview in~\cite{Chertock2019}. However, the time of existence of sufficiently smooth solutions is usually not known and it is very challenging to determine the actual error of a numerical simulation.

In addition to the mentioned high order (usually DG) schemes various approaches have been recently pursued in order to obtain efficient and accurate schemes: the mass-transport approach for the one-dimensional problem~\cite{blanchet2008convermasstrans, carrillo2019hybridmasstrans}, adaptive mesh refinement~\cite{Dudley2011, kolbe2022adaptrectanmesh} and moving mesh schemes \cite{Sulman2019,CKRW2019}. The latter are based on an \textit{error sensor} that determines regions in which the mesh should be refined --- in practice, a smoothed density gradient is used for this purpose. Apart from modeling applications a main motivation for these schemes is the investigation of structural properties of solutions, e.g., whether in two-species models blow-up of both densities is simultaneous, and underresolved simulations may give misleading results as noted in~\cite{CKRW2019}.

In this situation, a posteriori error estimates are a valuable complement to a priori error estimates that are currently missing in the literature. They offer several advantages:
They allow to move regularity requirements away from the exact solution to the numerical solution, thereby reducing the number of non-verifiable conditions in the analysis. They also provide computable error bounds so that one can check the accuracy of a given simulation and, finally, they provide a more rigorous way to define an error sensor for mesh adaptation. The goal of this paper is to obtain a posteriori error estimates that ensure that, as long as the numerical solution is 'well-behaved' --- in a sense that is to be made precise --- so is the exact solution and numerical and exact solution can be guaranteed to be close to each other.

We provide such an a posteriori error estimate for a scheme numerically approximating the parabolic-elliptic Keller-Segel system with non-linear power-law type diffusion that has been considered in \cite{calvez2017equil} and reads
\begin{equation}
 \label{KS}
 \begin{split}
 \partial_t \rho + \nabla \cdot (\rho \nabla c) - \Delta \rho^\gamma &=0\\
 \rho = c - \Delta c
 \end{split}
 \qquad \text{on } (0,T) \times \rT^d,
\end{equation}
where $\gamma \in [1,3]$ and $\rT^d$ denotes the $d$-dimensional flat torus, i.e., we consider periodic boundary data. The numerical method we consider is a new linearly implicit first-order finite volume scheme that builds on the upwind approach in~\cite{Filbet2006}. The scheme is introduced in this work and we show that it preserves positivity. While we allow for non-equidistant meshes in the one-dimensional case our study focuses on Cartesian meshes in two dimensions.
We believe that our analysis can, in principle, be extended to other (higher order) schemes and more complex models. Indeed, the companion paper~\cite{GiesselmannKwon} provides a posteriori error estimates for a DG scheme applied to the parabolic-elliptic Keller-Segel system with linear diffusion. We also believe that our analysis can be extended to more general meshes following the ideas in~\cite{Nicaise2005}.

Our analysis relies on an elliptic reconstruction of the numerical solution, cf.~\cite{Makridakis2003}, and a generalization of the Gronwall lemma that allows us to cope with strong nonlinearities. This generalized Gronwall lemma leads to \textit{conditional} error bounds, i.e., bounds that are only known to be valid when they are small enough; a condition that can be verified a posteriori.

The outline of the remainder of this paper is as follows: In Section 2 we state and prove two stability estimates for system~\eqref{KS}, one addressing the linear diffusion case ($\gamma=1$) and the other one the  nonlinear diffusion case ($1< \gamma \leq 3$). Section 3 introduces a finite volume scheme for \eqref{KS} and shows its positivity-preserving property. In Sections 4 and 5 we derive computable residual estimates for our scheme that are crucial for the evaluation of the a posteriori error estimates. Finally, we show numerical experiments for different $\gamma$ in Section 6, investigating the behavior of the a posteriori bounds and the stability conditions under global mesh refinement.

\section{Stability estimates}\label{sec:stability}
Let $(\rho, c)$ be a weak solution of~\eqref{KS} and $(\brho,\bc)$ a strong solution of the system
\begin{equation}\label{KSn-p}
 \begin{split}
 \partial_t \brho + \nabla \cdot (\brho \nabla \bc) - \Delta \brho^\gamma &=R_\rho\\
 \brho =  \bc -  \Delta \bc
 \end{split}
\qquad \text{ on } (0,T) \times \rT^d
\end{equation}
for some given function $R_\rho \in L^2(0,T; H^{-1}(\Td))$. The regularity requirements on $(\rho, c)$ and $(\brho,\bc)$ are made precise in the theorems below. Note that elliptic regularity implies $\|\bar  c\|_{H^2} \leq C_{ell} \|\bar \rho\|_{L^2}$ for some $C_{ell}>0$.
In the section at hand, we provide estimates for the difference $(\rho - \brho, c - \bc)$ in terms of the difference of their initial data and in terms of $R_\rho$. We provide two types of such stability estimates. The first one shown in Section~\ref{sec:classicalstability} is based on $L^\infty(0,T;L^2(\rT^d))$-norm estimates and works well for $\gamma=1$. The second one shown in Section~\ref{sec:nonlinearstability} is based on $L^\infty(0,T;H^{-1}(\rT^d))$-norm estimates and works well for larger values of $\gamma$. The situation we have in mind is that $\brho$ and $\bc$ are obtained as a reconstruction of a numerical solution and $R_\rho$ is the corresponding residual with respect to~\eqref{KS}.

\subsection{Stability estimate for the classical model}\label{sec:classicalstability}
In this section we provide a stability estimate for the classical model and therefore assume $\gamma =1$.
Subtracting \eqref{KSn-p} from \eqref{KS} and testing with $\rho- \brho$ we obtain
\begin{equation}
 \int_{\rT^d} (\rho - \brho) \partial_t (\rho - \brho) dx
 = \int_{\rT^d} (\rho - \brho) \Delta (\rho -\brho) -  (\rho - \brho) \nabla \cdot \left( \rho \nabla c - \brho \nabla \bc\right) - R_\rho(\rho - \brho) \, dx,
\end{equation}
which after integration by parts implies
\begin{align}
 \frac{d}{dt} \left[ \int_{\rT^d} \frac{1}{2} (\rho - \brho)^2 dx \right]  &+\int_{\rT^d}  |\nabla (\rho - \brho)|^2 \, dx \notag \\
  &=  \int_{\rT^d} \nabla (\rho - \brho) \rho \nabla (c - \bc) + \nabla (\rho - \brho) (\rho - \brho) \nabla \bc - R_\rho(\rho - \brho) \, dx \notag \\
  &=  \int_{\rT^d} \nabla (\rho - \brho) \brho \nabla (c - \bc) + \nabla (\rho - \brho) (\rho - \brho) \nabla \bc - R_\rho(\rho - \brho) \notag \\
 &\quad + \nabla (\rho - \brho) (\rho -\brho) \nabla (c - \bc) \, dx.  
 \end{align}
Using Cauchy-Schwartz's inequality we obtain
 \begin{align*}
 \frac{d}{dt} \left[\frac 12 \|  \rho - \brho\|_{L^2}^2  \right] 
   +   |\rho - \brho|_{H^1}^2  &\leq |\rho - \brho|_{H^1} \|\brho\|_{L^3}  |c - \bc|_{W^{1,6}} + |\rho - \brho|_{H^1} \|\rho - \brho\|_{L^2} \|\nabla \bc\|_{L^\infty}\\
   &\quad + \|R_\rho\|_{H^{-1}} \|\rho - \brho\|_{H^1}
 + |\rho - \brho|_{H^1} \|\rho -\brho\|_{L^2} \|\nabla (c - \bc)\|_{L^\infty}.
 \end{align*}
Provided the number of space dimensions satisfies $d \leq 3$ this implies
 \begin{align}\label{bla2}
   \frac{d}{dt} \left[\frac 12 \|  \rho - \brho\|_{L^2}^2  \right]  +  |\rho - \brho|_{H^1}^2 &\leq C_S |\rho - \brho|_{H^1} \|\brho\|_{L^3}  |c - \bc|_{H^2} + |\rho - \brho|_{H^1} \|\rho - \brho\|_{L^2} \|\nabla \bc\|_{L^\infty} \notag \\
   &\quad + \|R_\rho\|_{H^{-1}} \|\rho - \brho\|_{H^1}
 +C_S' |\rho - \brho|_{H^1} \|\rho -\brho\|_{L^2} \| c - \bc\|_{H^3} \notag\\
      &\leq C_S |\rho - \brho|_{H^1} \|\brho\|_{L^3}  \|\rho - \brho\|_{L_2} + |\rho - \brho|_{H^1} \|\rho - \brho\|_{L^2} \|\nabla \bc\|_{L^\infty} \notag \\
   &\quad + \|R_\rho\|_{H^{-1}} \|\rho - \brho\|_{H^1}
 +C_{ell} C_S' |\rho - \brho|_{H^1} \|\rho -\brho\|_{L^2} \| \rho - \brho\|_{H^1},
 \end{align}
 where $C_S$ and $C_S'$ are the Lipschitz constants of the embeddings
 $ H^1 \hookrightarrow L^6$ and $H^2 \hookrightarrow L^\infty$ respectively.
 Using Young's inequality, we obtain
 \begin{align}\label{3}
 \frac{d}{dt} \left[\frac 12 \|  \rho - \brho\|_{L^2}^2  \right]  +  |\rho - \brho|_{H^1}^2  
 &\leq \frac14 |\rho - \brho|_{H^1}^2 + C_S^2  \|\brho\|_{L^3}^2  \|\rho - \brho\|_{L_2}^2
   + \frac14 |\rho - \brho|_{H^1}^2 \notag \\
 &\quad +  \|\rho - \brho\|_{L^2}^2 \|\nabla \bc\|_{L^\infty}^2 + \|R_\rho\|_{H^{-1}}^2 + \frac14 \|\rho - \brho\|_{H^1}^2 \notag \\
   &\quad +  |\rho - \brho|_{H^1}  C_S' \|\rho -\brho\|_{L^2} \| \rho - \brho\|_{H^1}.
 \end{align} 
 
 In the next step, we set
 \begin{equation}
 \begin{split}
  y_1 (t) &= \frac 1 2 \|  \rho (t,\cdot) - \brho (t,\cdot)\|_{L^2}^2,   \\
  y_2(t) &= \frac 1 4 |  \rho (t,\cdot) - \brho (t,\cdot)|_{H^1}^2, \\
  y_3 (t)&=  |\rho(t,\cdot) - \brho(t,\cdot)|_{H^1}  C_S' \|\rho(t,\cdot) -\brho(t,\cdot)\|_{L^2} \| \rho(t,\cdot) - \brho(t,\cdot)\|_{H^1}, \\
  a_1(t) &=  2 C_S^2 \|\brho(t,\cdot)\|_{L^3}^2  +  2 \|\nabla \bc(t,\cdot)\|_{L^\infty}^2 + \frac 1 2.
 \end{split}
\end{equation}
Then, we integrate \eqref{3} in time from $0$ to $T'$ and get %use Young's inequality to obtain
\begin{equation}\label{near:gron}
 y_1(T') +  \int_0^{T'} y_2(t) dt 
 \leq
 y_1(0)  + \int_0^{T'} a_1(t) y_1(t)  +\| R_\rho\|_{H^{-1}}^2 + y_3(t)  dt.
\end{equation}

We have, using Young's inequality,
\begin{align*}  y_3(t) &\leq
C_S' |\rho - \brho|_{H^1}^2 \|\rho -\brho\|_{L^2} + C_S' |\rho - \brho|_{H^1} \|\rho -\brho\|_{L^2}^2\\
&\leq 4 \sqrt{2} C_S' y_2 \sqrt{y_1} + 4 C_S' \sqrt{y_2} y_1
\leq  (4 \sqrt{2} + 2)\, C_S'  \sqrt{y_1(t)} (y_1(t) + y_2(t) )
\end{align*}
and consequently
\begin{equation}\label{est:y3}
  \int_0^{T'} y_3(t) dt = (4 \sqrt{2} + 2) \, C_S' \sup_{t\in[0,T']} \sqrt{y_1(t)}  \int_0^{T'} y_1(t) + y_2(t) dt .
\end{equation}
 Equations \eqref{near:gron} and \eqref{est:y3} show that our analysis fits into the framework of \cite[Prop 6.2]{Bartels}
 with $y_1,y_2,y_3,a$ as above $B= 2C_S'$, $\beta = \frac 12$,
 \[
 A:= y_1(0)  + \int_0^{T} \| R_\rho\|_{H^{-1}}^2  dt                                                          
                                                      \quad \text{ and } \quad
  E_1 := \exp\left(  \int_0^{T}  a_1(t) dt\right).\]
Then, taking into account the regularity requirements in~\cite[Theorem 9]{Dragomir2003} we have the following conditional error estimate.

\begin{theorem}[Stability estimate for linear diffusion]\label{thm:stab1}
  Let $d\in\{1,2,3\}$, $\gamma=1$ and suppose that $\rho \in H^1(0, T; H^{-1}(\rT^d)) \cap L^2(0, T; H^{1}(\rT^d))$ and $c\in L^2(0, T; H^{1}(\rT^d))$ constitute a weak solution of the Keller--Segel model \eqref{KS}. Furthermore, for $\brho \in H^1(0, T; H^{-1}(\rT^d)) \cap L^2(0, T; H^{2}(\rT^d))$ let $R_\rho \in L^2(0,T; H^{-1}(\rT^d))$ and $\bar c \in L^2(0, T; H^{2}(\rT^d)) \cap L^2(0, T; W^{1,\infty}(\rT^d))$ satisfy \eqref{KSn-p} in terms of a strong solution. Then, provided
\begin{equation}\label{eq:classicstabcondition}
8 A E_1 ( 8 (4 \sqrt{2} + 2) C_S' (1 +T) E_1)^2 \leq 1
\end{equation}
%\nk{corrected from $4 C_S'$} 
is satisfied, it holds
\begin{align}\label{eq:classicmainestimate}
 \sup_{t \in [0,T]}  \frac 1 2 \|  \rho (t,\cdot) - \brho (t,\cdot)\|_{L^2}^2  &+ \frac 1 4 \int_0^T |  \rho (t,\cdot) - \brho (t,\cdot)|_{H^1}^2 dt
 \notag\\
 &\leq 8 \left(   \frac 1 2 \|  \rho (0,\cdot) - \brho (0,\cdot)\|_{L^2}^2 + \int_0^{T} \| R_\rho\|_{H^{-1}}^2  dt   \right) \notag \\
&\qquad  \exp \left( \int_0^T 2 C_S^2 \|\brho(t,\cdot)\|_{L^3}^2  +  2 \|\nabla \bc(t,\cdot)\|_{L^\infty}^2 + \frac 1 2 dt\right).
\end{align}
\end{theorem}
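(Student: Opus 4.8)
Almost all of the work is already contained in the computation preceding the statement: the energy identity obtained by testing the difference of \eqref{KS} and \eqref{KSn-p} with $\rho - \brho$, the repeated use of Cauchy--Schwarz, the Sobolev embeddings and Young's inequality leading to \eqref{bla2}--\eqref{3}, and finally the reorganization into the scalar quantities $y_1, y_2, y_3, a_1$ that yields the integrated inequality \eqref{near:gron} together with the bound \eqref{est:y3} for the superlinear remainder. The plan is therefore to (i) justify that these manipulations are legitimate under the stated regularity, and (ii) close the argument by invoking the generalized Gronwall lemma.

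First I would confirm the regularity bookkeeping. The passage from the weak formulation to the energy identity requires that $t \mapsto \tfrac12 \|\rho - \brho\|_{L^2}^2$ be absolutely continuous with $\tfrac{d}{dt}\tfrac12\|\rho-\brho\|_{L^2}^2 = \langle \partial_t(\rho - \brho), \rho - \brho\rangle_{H^{-1},H^1}$, which is exactly the Gelfand-triple continuity result available because $\rho - \brho \in L^2(0,T;H^1(\rT^d)) \cap H^1(0,T;H^{-1}(\rT^d))$. The embeddings $H^1 \hookrightarrow L^6$ and $H^2 \hookrightarrow L^\infty$ (with constants $C_S, C_S'$) are admissible for $d \leq 3$, and the elliptic estimate $\|\bc\|_{H^2} \leq C_{ell}\|\brho\|_{L^2}$ converts the $H^3$-dependence in \eqref{bla2} into an $H^1$-dependence on $\rho - \brho$. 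With these in place, \eqref{near:gron} and \eqref{est:y3} hold for every $T' \in [0,T]$, and the regularity conditions required to apply the Gronwall machinery (integrability of $a_1$ and $\|R_\rho\|_{H^{-1}}^2$, continuity of $y_1$, integrability of $y_2$) follow from the hypotheses; here I would cite \cite[Theorem 9]{Dragomir2003} for precisely these conditions.

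Next I would apply \cite[Prop 6.2]{Bartels} with the identifications $B = 2C_S'$, $\beta = \tfrac12$, $A = y_1(0) + \int_0^{T} \|R_\rho\|_{H^{-1}}^2\,dt$ and $E_1 = \exp(\int_0^{T} a_1\,dt)$. The smallness hypothesis \eqref{eq:classicstabcondition} is exactly the threshold below which the superlinear feedback carried by $\sup_t \sqrt{y_1(t)}$ in \eqref{est:y3} can be absorbed without destabilizing the otherwise linear Gronwall estimate; the factors $(1+T)$ and $8(4\sqrt2+2)C_S'$ appearing there are just the constants tracked through the proposition. Under this condition the lemma delivers $\sup_{[0,T]} y_1 + \int_0^{T} y_2\,dt \leq 8\,A\,E_1$, and unwinding the definitions of $y_1, y_2, A, E_1$ reproduces \eqref{eq:classicmainestimate} verbatim.

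The main obstacle, and the reason a \emph{conditional} estimate is unavoidable, is the cubic term $y_3$: it is genuinely superlinear and cannot be tamed by a standard linear Gronwall argument. The resolution is the bootstrap embedded in \cite[Prop 6.2]{Bartels}, where one establishes the bound on a maximal subinterval and then uses the smallness condition \eqref{eq:classicstabcondition} to exclude blow-up of $\sup_t \sqrt{y_1(t)}$, thereby propagating the estimate to all of $[0,T]$. The only genuinely delicate point in this final step is matching the constant $8$ and the exponent-$2$ structure of \eqref{eq:classicstabcondition} to the normalization of Bartels' statement; the remaining constant tracking is routine.
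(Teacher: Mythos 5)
Your proposal is correct and follows the paper's own argument essentially verbatim: the proof of Theorem~\ref{thm:stab1} in the paper is precisely the computation \eqref{bla2}--\eqref{est:y3} followed by the application of \cite[Prop 6.2]{Bartels} with the identifications $y_1,y_2,y_3$, $A$, $E_1$, $\beta=\tfrac12$ that you state, together with the regularity requirements of \cite[Theorem 9]{Dragomir2003}. Your additional justifications --- the Gelfand-triple chain rule for $t\mapsto\tfrac12\|\rho-\brho\|_{L^2}^2$ and the remark that the effective superlinear constant entering \eqref{eq:classicstabcondition} is $(4\sqrt{2}+2)C_S'$ (the paper's in-text ``$B=2C_S'$'' notwithstanding) --- only make the bookkeeping more explicit than the paper's.
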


Note that, $\|\nabla \bc(t,\cdot)\|_{L^\infty}^2$ in \eqref{eq:classicmainestimate} is bounded up to a constant by  $\|\brho(t,\cdot)\|_{L^{3+\epsilon}}^2$ for any $\epsilon>0$.
 %%%%%%%%%%%%%%%%%%%%%%%%%%%%%%%%%%%%%%%%%%%%%%%%%%%%%%%%%%%%%%%%%%%%%%%%%%%%%%%%%%%%%%%%%%%%%%%%%%%%%%%%%
\subsection{Stability estimate for the power-law diffusion model}\label{sec:nonlinearstability}
In this section we show a stability estimate in case of nonlinear diffusion and assume that
$1 < \gamma \leq 3$. Subtracting \eqref{KSn-p} from \eqref{KS} and testing with $c- \bc$ we obtain
 \begin{equation}
  \int_{\rT^d} (c - \bc)  \partial_t (\rho - \brho)\, dx  = \int_{\rT^d} (c - \bc) \Delta (\rho^\gamma -\brho^\gamma) -  (c - \bc) \nabla \cdot \left( \rho \nabla c - \brho \nabla \bc\right) - R_\rho( c- \bar c)\, dx,
 \end{equation}
 which after integration by parts implies
 \begin{align}\label{eq:hm1evol}
  \frac{d}{dt} \left[ \int_{\rT^d} \frac{1}{2} (c - \bc)^2 + \frac{1}{2} |\nabla (c - \bc)|^2 dx \right]
   &=  \int_{\rT^d} \Delta (c - \bc)  (\rho^\gamma -\brho^\gamma) 
     + \nabla (c - \bc) \cdot \left( \rho \nabla c - \brho \nabla \bc\right)  \notag \\
   &\qquad -  R_\rho( c- \bar c)  \, dx \notag \\
  &= \int_{\rT^d}   (c- \bc) (\rho^\gamma -\brho^\gamma) -  (\rho- \brho)(\rho^\gamma - \brho^\gamma) + \rho |\nabla (c- \bc)|^2 \notag \\
  &\qquad + \nabla \bc \cdot \nabla (c - \bc) (\rho - \brho) - R_\rho( c- \bar c) \, dx.
 \end{align}

 We introduce the notations
 \begin{equation}
 \begin{split}
  z_1 (t) & \coloneq \frac 1 2 \|  c (t,\cdot) - \bc (t,\cdot)\|_{L^2}^2 + \frac 1 2 \|  \nabla (c (t,\cdot) - \bc (t,\cdot)  ) \|_{L^2}^2,  \\
  z_2(t) &:= \frac 1 2 \|  F(\rho,\brho)^{1/2} ( \rho (t,\cdot) - \brho (t,\cdot))\|_{L^2}^2,
  \end{split}
  \end{equation}
  where
   \begin{equation}\label{eq:F}
    F(\rho, \brho) := \left\{ \begin{array}{ccc}
                              ´\frac{\rho^\gamma - \brho^\gamma}{\rho - \brho} &&\text{if } \rho \not=\brho \\
                             \gamma \rho^{\gamma - 1}  && \text{otherwise}
                            \end{array}\right. .
   \end{equation}
The latter satisfies the bound
   \begin{equation}\label{eq:Flowerbound}
    F(\rho, \brho) =  \frac{\gamma}{\rho - \brho} \int_{\brho}^\rho s^{\gamma - 1} ds 
    \geq \left\{\begin{aligned}
                 \frac{\gamma}{2} (\rho^{\gamma -1 } + \brho^{\gamma -1}) & \text{ if} & 1 < \gamma \leq 2\\
                 \frac{\gamma}{2^{\gamma -1}} (\rho^{\gamma -1 } + \brho^{\gamma -1}) & \text{ if} & 2 \leq  \gamma \\
                \end{aligned}
  \right. ,
   \end{equation}
   which can be easily verified as follows: if the integrand is concave the integral is bounded from below by the trapezoidal rule and if the integrand is convex the integral is bounded from below by the midpoint rule.
 %  and an analogue formula for $\partial_{\brho} F(\rho, \brho)$ that
 %  \begin{equation}\label{eq:Flowerbound}
 %   F(\rho, \brho) \geq \max \{   \rho^{\gamma -1 }, \brho^{\gamma-1}\}.
 %  \end{equation}
Due to \eqref{eq:Flowerbound} we have
\begin{equation}\label{eq:z2lower}
 z_2 \geq \int \frac{c_\gamma}{2} (\rho^{\gamma -1 } + \brho^{\gamma -1})  (\rho - \brho)^2 \, dx\geq \int \frac{c_\gamma}{2} |\rho - \brho|^{\gamma + 1} \, dx 
\end{equation}
with $c_\gamma=\frac{\gamma}{2}$ if $\gamma \leq 2$ and $c_\gamma=\frac{\gamma}{2^{\gamma -1}}$ if $\gamma \geq 2$.
% We can also prove
% \begin{equation}
%  z_2 \geq \left\{ \begin{array}{ccc}
% C (\rho^{\gamma} - \rho^\gamma)^2 &:& |\rho - \brho | \leq 1\\
% C (\rho^{\gamma} - \rho^\gamma)^{\frac{\gamma +1 }{\gamma}} &:& |\rho - \brho | \geq 1
%  \end{array}
%  \right.
% \end{equation}
Then \eqref{eq:hm1evol} implies
\begin{align}\label{eq:defI}
  \frac{d}{dt} z_1 (t) + 2 z_2(t) &= \int_{\rT^d}   (c- \bc) (\rho^\gamma -\brho^\gamma) + \rho |\nabla (c- \bc)|^2 \notag \\
                                  &\quad + \nabla \bc \cdot \nabla (c - \bc) (\rho - \brho) +  R_\rho( c- \bar c) dx
 =: I_1 + I_2 + I_3 + I_4.
\end{align}
We will estimate the terms $I_1,\dots, I_4$ one by one. Before we do this, let us recall the following technical lemmas:

\begin{lemma}\label{lem:tech1}
 For $u, \bar u\in \mathbb{R}_{\geq  0}$ and $\alpha \geq 1$ the following inequalities holds: $|u^\alpha - \bar u^\alpha| \geq |u - \bar u|^\alpha$.
 \end{lemma}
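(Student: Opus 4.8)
The plan is to prove the elementary inequality $|u^\alpha - \bar u^\alpha| \geq |u - \bar u|^\alpha$ for $u,\bar u \geq 0$ and $\alpha \geq 1$. By symmetry in $u$ and $\bar u$ (both sides are unchanged under swapping), I would assume without loss of generality that $u \geq \bar u \geq 0$, so that both $u^\alpha - \bar u^\alpha$ and $u - \bar u$ are nonnegative and the absolute values can be dropped. The claim then reduces to showing $u^\alpha - \bar u^\alpha \geq (u - \bar u)^\alpha$.

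The cleanest approach is to set $t = \bar u / u \in [0,1]$ in the case $u > 0$ (the case $u = 0$ forces $\bar u = 0$ and both sides vanish, so it is trivial). Dividing the target inequality by $u^\alpha > 0$ reduces it to the single-variable statement
\begin{equation*}
 1 - t^\alpha \geq (1 - t)^\alpha \qquad \text{for all } t \in [0,1].
\end{equation*}
I would establish this by defining $g(t) := 1 - t^\alpha - (1-t)^\alpha$ and checking that $g(0) = 0$ together with $g'(t) = -\alpha t^{\alpha - 1} + \alpha (1-t)^{\alpha - 1} = \alpha\bigl((1-t)^{\alpha -1} - t^{\alpha-1}\bigr) \geq 0$ on $[0,\tfrac12]$ and $\leq 0$ on $[\tfrac12,1]$, while $g(1) = 0$ as well; since $g$ vanishes at both endpoints and rises then falls, it is nonnegative throughout. (Here $\alpha \geq 1$ guarantees $s \mapsto s^{\alpha-1}$ is nondecreasing, which is exactly what makes the sign of $g'$ behave this way.)

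An alternative, perhaps slightly slicker, route avoids calculus entirely: writing $u = (u - \bar u) + \bar u$ and using that for nonnegative reals $a,b$ and $\alpha \geq 1$ one has superadditivity $(a+b)^\alpha \geq a^\alpha + b^\alpha$, I would apply this with $a = u - \bar u$ and $b = \bar u$ to get $u^\alpha = \bigl((u-\bar u) + \bar u\bigr)^\alpha \geq (u - \bar u)^\alpha + \bar u^\alpha$, which rearranges immediately to the desired bound. The superadditivity itself follows from dividing by $(a+b)^\alpha$ and using that $s^\alpha \leq s$ for $s \in [0,1]$ when $\alpha \geq 1$, applied to $s = a/(a+b)$ and $s = b/(a+b)$.

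I do not anticipate a genuine obstacle here: the statement is a standard convexity/superadditivity fact and the only point requiring minor care is the reduction by symmetry and the degenerate case $u = \bar u$ (where both sides are zero). The main thing to get right is making the normalization argument clean and flagging that $\alpha \geq 1$ is used precisely to ensure $s^{\alpha-1}$ is monotone (equivalently $s^\alpha \leq s$ on the unit interval); I would lead with the superadditivity proof since it is the shortest and most self-contained.
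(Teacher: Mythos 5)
Your proof is correct, but it takes a genuinely different route from the paper. The paper's argument is a one-line integral comparison: assuming w.l.o.g.\ $u > \bar u$, it writes $u^\alpha - \bar u^\alpha = \alpha \int_{\bar u}^{u} s^{\alpha-1}\,ds$ and compares the integrand with the shifted integrand $(s-\bar u)^{\alpha-1}$, which is valid because $\bar u \geq 0$ and $s \mapsto s^{\alpha-1}$ is nondecreasing for $\alpha \geq 1$; the integral of the shifted integrand evaluates exactly to $(u-\bar u)^\alpha$. (Note that the paper's displayed chain shows ``$\leq$'' between the two integrals, which is a sign typo: since $s^{\alpha-1} \geq (s-\bar u)^{\alpha-1}$ on $[\bar u, u]$, the correct and needed direction is ``$\geq$'', and the stated conclusion is then immediate.) Your preferred superadditivity argument --- $u^\alpha = \bigl((u-\bar u)+\bar u\bigr)^\alpha \geq (u-\bar u)^\alpha + \bar u^\alpha$, established by normalizing and using $s^\alpha \leq s$ on $[0,1]$ --- is calculus-free and fully self-contained; it is in effect the discrete counterpart of the paper's comparison, since integrating the pointwise bound on $s^{\alpha-1}$ is precisely superadditivity in integral form. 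Your first route, via $g(t) = 1 - t^\alpha - (1-t)^\alpha$ with $g(0)=g(1)=0$ and the sign analysis of $g'$, is also sound but the least economical of the three. What the paper's phrasing buys is consistency of technique: the same ``represent differences as integrals of $s^{\alpha-1}$ and compare monotone integrands'' device recurs in the bound \eqref{eq:Flowerbound} and in the proof of Lemma~\ref{lem:tech3}, where Lemma~\ref{lem:tech1} is applied; what your superadditivity version buys is brevity and minimal degenerate-case bookkeeping (only $u=\bar u=0$ needs a remark).
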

\begin{proof}
 Assume w.l.o.g. $u> \bar u$ then we have
 \[ |u^\alpha - \bar u^\alpha| = \alpha \int_{\bar u}^u  s^{\alpha-1} ds
 \leq \alpha \int_{\bar u}^u  (s- \bar u)^{\alpha-1} ds  = |u - \bar u|^\alpha \]
 using the monotonicity of the mapping $s \mapsto s^{\alpha-1}$.
 \end{proof}

\begin{lemma}\label{lem:tech3}
For all $\rho, \brho >0$ and $\gamma >1$ the following inequality holds:
  \begin{equation}
  |\rho^{\gamma -1} - \brho^{\gamma-1}|
|\rho - \brho|  \leq \left( (\rho^\gamma - \brho^\gamma)(\rho - \brho)\right)^{\frac{\gamma}{\gamma+1}}.
\end{equation}
\end{lemma}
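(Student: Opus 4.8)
The plan is to reduce the claim to the preceding Lemma~\ref{lem:tech1} by invoking it twice with two different exponents and then combining the resulting estimates with the right fractional powers. Since both sides are symmetric in $\rho$ and $\brho$ and vanish when $\rho=\brho$, I may assume without loss of generality that $\rho>\brho>0$ and drop the absolute values.

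First I would recast the target inequality in a form that is linear in the exponents. Raising both sides to the power $\tfrac{\gamma+1}{\gamma}$ and cancelling one factor of $(\rho-\brho)$, the claim is equivalent to
\[
(\rho^{\gamma-1}-\brho^{\gamma-1})^{\frac{\gamma+1}{\gamma}}\,(\rho-\brho)^{\frac1\gamma}\le \rho^\gamma-\brho^\gamma .
\]
The two factors on the left will be controlled separately. Applying Lemma~\ref{lem:tech1} with exponent $\gamma\ge1$ gives $\rho^\gamma-\brho^\gamma\ge(\rho-\brho)^\gamma$, i.e.\ $\rho-\brho\le(\rho^\gamma-\brho^\gamma)^{1/\gamma}$. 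For the other factor I would apply Lemma~\ref{lem:tech1} not to $\rho,\brho$ directly but to $u=\rho^{\gamma-1}$ and $v=\brho^{\gamma-1}$ with the exponent $\tfrac{\gamma}{\gamma-1}$; since $u^{\gamma/(\gamma-1)}=\rho^\gamma$ and $v^{\gamma/(\gamma-1)}=\brho^\gamma$, this yields $\rho^\gamma-\brho^\gamma\ge(\rho^{\gamma-1}-\brho^{\gamma-1})^{\gamma/(\gamma-1)}$, hence $\rho^{\gamma-1}-\brho^{\gamma-1}\le(\rho^\gamma-\brho^\gamma)^{(\gamma-1)/\gamma}$.

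To conclude I would raise this second estimate to the power $\tfrac{\gamma+1}{\gamma}$ and the first to the power $\tfrac1\gamma$, and multiply them: the exponents of $(\rho^\gamma-\brho^\gamma)$ on the right add up to $\tfrac{(\gamma-1)(\gamma+1)}{\gamma^2}+\tfrac1{\gamma^2}=1$, which reproduces exactly the displayed equivalent inequality and therefore the lemma. The only non-routine point is the choice of the exponent $\tfrac{\gamma}{\gamma-1}$ in the second application, engineered precisely so that raising $\rho^{\gamma-1}$ to it returns $\rho^\gamma$; once this is spotted, the remaining verification is pure bookkeeping. I would also note that both invocations of Lemma~\ref{lem:tech1} use exponents, namely $\gamma$ and $\tfrac{\gamma}{\gamma-1}$, that are $\ge1$ for every $\gamma>1$, so no case distinction at $\gamma=2$ is required here, in contrast with the bound~\eqref{eq:Flowerbound}.
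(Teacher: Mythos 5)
Your proposal is correct and takes essentially the same route as the paper: the paper's proof likewise invokes Lemma~\ref{lem:tech1} twice, once with exponent $\gamma$ for $(\rho,\brho)$ and once with exponent $\tfrac{\gamma}{\gamma-1}$ for $u=\rho^{\gamma-1}$, $\bar u=\brho^{\gamma-1}$, and your exponent bookkeeping is exactly the paper's interpolation parameter $a=\tfrac{(\gamma+1)(\gamma-1)}{\gamma^2}$ (your two powers $\tfrac{(\gamma-1)(\gamma+1)}{\gamma^2}$ and $\tfrac{1}{\gamma^2}$ are $a$ and $1-a$). The only difference is presentational: the paper writes $\rho^\gamma-\brho^\gamma$ as the product $(\rho^\gamma-\brho^\gamma)^a(\rho^\gamma-\brho^\gamma)^{1-a}$ and solves for $a$ at the end, whereas you fix the exponents up front.
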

\begin{proof}
 Assume w.l.o.g. $\rho> \brho$.  Setting $u:= \rho^{\gamma -1} $ and $\bar u := \brho^{\gamma -1} $ we have for any $a \in [0,1]$, using Lemma~\ref{lem:tech1},
 \[
   (\rho^\gamma - \brho^\gamma)
   = ( u^{\frac{\gamma}{\gamma -1 }} - 
   \bar u^\frac{\gamma}{\gamma -1 })^{a} (\rho^\gamma - \brho^\gamma)^{1-a}
   \geq (u - \bar u)^{\frac{a\gamma}{\gamma-1}}
   (\rho - \brho)^{(1-a)\gamma}.
 \]
 It remains to show that we can choose $a$ such that
 \[\frac{a\gamma}{\gamma-1} = (1-a)\gamma +1 = \frac{\gamma +1 }{\gamma},  \]
 which can be easily verified taking $a = \frac{(\gamma+1)(\gamma-1)}{\gamma^2}$.
\end{proof}

We next estimate $I_1$. We observe that
\begin{equation}\label{eq:mean}
 |\rho^\gamma - \brho^\gamma| 
 = \gamma \tilde \rho^{\gamma-1} |\rho - \brho|
 \leq \gamma  \brho^{\gamma-1} |\rho - \brho|
+  \gamma |\rho^{\gamma-1} - \brho^{\gamma-1}| |\rho - \brho|
\end{equation}
for some $\tilde \rho$ between $\rho$ and $\brho$.
Thus using \eqref{eq:Flowerbound} we estimate
%We  define $\Omega= \{ x \in \mathbb{T}^d : |\rho(x) - \brho(x)| \leq 1\} $ and observe
\begin{align}\label{eq:I11}
I_1 &\leq \int_{\rT^d}\gamma |c- \bc| \brho^{\gamma-1}  |\rho - \brho|  
+\gamma |c- \bc| |\rho^{\gamma -1} - \brho^{\gamma-1}|
|\rho - \brho| \, dx \notag \\
&\leq
\gamma \sqrt{2 c_\gamma^{-1}z_2}\, \| \brho^{\frac{\gamma-1}{2}} \|_{L^3} \| c - \bar c \|_{L^6} 
+\gamma \int|c- \bc| |\rho^{\gamma -1} - \brho^{\gamma-1}|
|\rho - \brho| \, dx.
\end{align}
Applying Lemma \ref{lem:tech3}, Hölder's and Young's inequality in inequality \eqref{eq:I11} yields
\begin{align}
 I_1 &\leq  2 \gamma \sqrt{c_\gamma^{-1}} C_S \sqrt{z_2} \| \brho^{\frac{\gamma-1}{2}} \|_{L^3} \sqrt{z_1} + \gamma \| c - \bar c \|_{\gamma+1} (2 z_2)^\frac{\gamma}{\gamma+1} \notag \\
 % \leq C_S \sqrt{z_2} \| \brho^{\frac{\gamma-1}{2}} \|_{L^3} \| c - \bar c \|_{H^1} +
% \int | c - \bar c| | u - \bar u|^2  +  | c- \bar c| |u- \bar u|^\frac{\gamma}{\gamma-1}\\
&\leq  2 \gamma \sqrt{c_\gamma^{-1}} C_S \sqrt{z_2} \| \brho^{\frac{\gamma-1}{2}} \|_{L^3} \sqrt{z_1} + \gamma^{\gamma +1} C_Y \| c - \bar c\|_{L^{\gamma +1}}^{\gamma+1}   + \frac38 z_2 \notag \\
&\leq 2 \gamma \sqrt{c_\gamma^{-1}} C_S \sqrt{z_2} \| \brho^{\frac{\gamma-1}{2}} \|_{L^3} \sqrt{z_1} +  C_S C_Y  (\sqrt{2} \gamma)^{\gamma +1} z_1^{\frac{\gamma+1}{2}}  + \frac38 z_2,
\end{align}
where as above $C_S$ is the Lipschitz constant from the embedding $H^1 \hookrightarrow L^6$ and $C_Y = \frac{1}{\gamma+1} \left( \frac{16\gamma}{3\gamma+3} \right)^\gamma$ is a constant from Young's inequality.

The next step is to control $I_2$. %(This is the place where we cannot handle $\gamma >3$)
We have
\begin{align*}
 I_2 &\leq 2 \|\bar \rho\|_{L^\infty} z_1 + \int_{\rT^d} (\rho - \bar \rho) |\nabla c - \nabla \bc|^2\, dx \\
 &\leq 2\|\bar \rho\|_{L^\infty} z_1 + \| \rho - \bar \rho\|_{L^{\gamma +1}}  \| \nabla c - \nabla \bc\|_{L^2}
\| \nabla c - \nabla \bc\|_{L^{\frac{2\gamma +2 }{\gamma -1}}}.
\end{align*}
Assuming that $7/5 \leq \gamma \leq 3$ in the case $d=3$ we use the Sobolev embedding $W^{2,\gamma+1} \hookrightarrow W^{1,\frac{2\gamma+2}{\gamma-1}}$ together with elliptic regularity to bound 
$\| \nabla c - \nabla \bc\|_{L^{\frac{2\gamma +2 }{\gamma -1}}}$  by a multiple of $\| \rho - \bar \rho\|_{L^{\gamma +1}} $
and obtain using \eqref{eq:z2lower} and Young's inequality
\begin{align}
 I_2 &\leq 2 \|\bar \rho\|_{L^\infty}  z_1  + \tilde C_S \| \rho - \bar \rho\|_{L^{\gamma +1}}^2  \| \nabla c - \nabla \bc\|_{L^2} \notag \\
 &\leq 2 \|\bar \rho\|_{L^\infty}  z_1   + \tilde C_S \tilde c_\gamma \sqrt{2 z_1}  z_2^{\frac2{\gamma+1}} \leq 2 \|\bar \rho\|_{L^\infty}  z_1 + \tilde C_S C_Y^\prime z_1^{\frac{\gamma+1}{2(\gamma-1)}}  + \frac38 z_2,
\end{align}
where $\tilde C_S$ accounts for the Sobolev embedding, $\tilde c_\gamma= (\frac{c_\gamma}{2})^{\frac{\gamma+1}{2}}$ and $C_Y^\prime=\sqrt{2} \tilde c_\gamma\frac{\gamma-1}{\gamma+1}\left( \frac{16 \sqrt{2} \tilde C_S \tilde c_\gamma}{3\gamma+3}\right)^\frac{2}{\gamma-1}$ comes from Young's inequality.
The term $z_1^{\frac{\gamma+1}{2(\gamma-1)}} $ is higher order compared to $z_1$ so that it can be treated via the generalized Gronwall lemma \cite[Prop 6.2]{Bartels}. It is simply $z_1$ for $\gamma =3$. %(Probably) this term cannot be handled for $\gamma >3$. 

Our next step is to control $I_3$. We estimate
\begin{align}
 I_3 &= \int_{\rT^d} \nabla \bc \cdot \nabla ( c - \bc ) (c- \bc)  - \nabla \bc \cdot \nabla ( c - \bc) (\Delta c - \Delta \bc) \, dx \notag \\
 &\leq \| \nabla \bc \|_{L^3} \|\nabla c - \nabla \bc \|_{L^2}\| c - \bc\|_{L^6}
 + \int_{\rT^d} \nabla \bc \cdot \nabla \cdot \left( -\frac12 |\nabla (c- \bc)|^2   + \nabla (c- \bc) \otimes \nabla (c- \bc) \right) \, dx \notag \\
 &\leq 2 C_S \| \nabla \bc \|_{L^3}  z_1 + C \int_{\rT^d} |\Delta \bc|  | \nabla (c- \bc)|^2 \, dx \notag \\
& \leq 2 C_S \| \nabla \bc \|_{L^3}  z_1  + 2 C z_1 \| \Delta \bc\|_{L^\infty} 
 \leq 2 C_S \| \nabla \bc \|_{L^3}  z_1  + 2 C z_1 \| \brho \|_{L^\infty},%\\
 %\leq C_S \| \nabla \bc \|_{L^3}  z_1  + C C_S z_1 \| \brho \|_{H^2}
\end{align}
where we have used integration by parts in the second and elliptic regularity in the last inequality and introduced the constant $C=d/2$.
%We assume in the following that an upper bound for $\| \brho \|_{L^\infty}$ %or $\| \brho \|_{H^2}$ can be computed.This is fine if we use $\bar \rho$ defined by the (interpolated in time) finite element interpretation of $\rho_h$. 

%: For $\rho_h$ bounded from below by a positive constant this is fine since $\brho$ solves an elliptic problem with known coefficients and known right hand side.
%For $\rho_h$ arbitrarily close to zero (or even exactly zero) we cannot argue this way.

Finally, $I_4$ can be controlled via
\begin{equation}
 I_4 \leq \| R_\rho\|_{H^{-1}} \sqrt{2 z_1}.
\end{equation}

We combine our estimates for $I_1,\dots,I_4$ in \eqref{eq:defI} and obtain, using Young's inequality
\begin{align}\label{eq:estgk}
 \frac{d}{dt} z_1 + 2 \, z_2 &\leq
4 C_S^2 c_\gamma^{-1} \gamma^2 \, \| \brho^{\frac{\gamma-1}{2}} \|_{L^3}^2 z_1 +  C_S C_Y (\sqrt{2} \gamma)^{\gamma+1}\,z_1^{\frac{\gamma +1}{2}}  + z_2
+ 2 \|\bar \rho\|_{L^\infty}  z_1 + \tilde C_S C_Y^\prime  z_1^{\frac{\gamma+1}{2(\gamma-1)}} \notag \\
&\quad + 2 C_S \| \nabla \bc \|_{L^3}  z_1  + 2 C \| \brho \|_{L^\infty} z_1  +  \| R_\rho\|_{H^{-1}}^2 + \frac 12 z_1.
\end{align}
Since for any $a >0$ and $1 < \alpha < \beta$ it holds $a^\alpha < a + a^\beta$ we have
\begin{equation}
  z_1^{\frac{\gamma +1}{2}}  \leq z_1 + z_1^{\frac{\gamma+1}{2(\gamma-1)}} \quad \text{if }1<\gamma < 2, \qquad z_1^{\frac{\gamma+1}{2(\gamma-1)}}  \leq z_1 + z_1^{\frac{\gamma+1}2}  \quad \text{if }2\leq\gamma \leq 3.
\end{equation}

% Equation \eqref{eq:estgk} fits into the generalized Gronwall lemma \cite[Prop 6.2]{Bartels} since  
% \[ z_1^{\frac{\gamma +1}{2}}  \leq z_1 + z_1^{\frac{\gamma+1}{2(\gamma-1)}}\]
% which follows from  $1 < \frac{\gamma +1}{2}  < \frac{\gamma+1}{2(\gamma-1)}$ 
% %or $z_1^{\frac{\gamma+1}{2(\gamma-1)}}$ by $z_1 + z_1^{\gamma +1} $. Which one it is depends on whether $2(\gamma-1)$ is larger or smaller than $1$.

% We can proceed similarly for $2 \leq \gamma \leq 3$ and obtain using Young's inequality
% \begin{multline}\label{eq:estgg}
%  \frac{d}{dt} z_1 + z_2 \leq
%   C_S^2 \| \brho^{\frac{\gamma-1}{2}} \|_{L^3}^2 z_1 + \frac38 z_2 + C_SC_Y (z_1 + z_1^{\frac{\gamma+1}2})
%   +  \|\bar \rho\|_\infty z_1 \\
%   +\tilde C_S  C_Y z_1^{\frac{\gamma+1}{2(\gamma-1)}}
%   + C_S \| \nabla \bc \|_{L^3}  z_1  + C  z_1 \| \brho \|_{L^\infty}   +  \| R_\rho\|_{H^{-1}}^2 + z_1.
% \end{multline}
% %\nk{Konstanten aus Young's Ungleichung sollten prinzipiell noch korrigiert werden (auch in\eqref{eq:estgk}). Insbesondere können wir für den ersten Term rechts die skalierte Version nutzen, so dass $1/2$ in \eqref{near:gronk} in beiden Fällen passt.}
% Equation \eqref{eq:estgg} fits into the generalized Gronwall lemma since
% $z_1^{\frac{\gamma+1}{2(\gamma-1)}}  \leq z_1 + z_1^{\frac{\gamma+1}2}$.

%%%%%%%%%%%%%%%%%%%%%%%%%%%%%%%%%%%%%%%%%%%%%%%%%%%%%%%%%%%%%%%%%%%%%%%%%%%%%%%%%%%%%%%
Equation \eqref{eq:estgk} fits into the framework of \cite[Prop 6.2]{Bartels}.
%the specifics vary slightly between two cases:
% $1 < \gamma \leq 2$ and $2 \leq \gamma \leq 3$:
We define
\begin{equation}\label{eq:a}
  a_\gamma(t)\coloneqq
   4 C_S^2 c_\gamma^{-1} \gamma^2 \| \brho^{\frac{\gamma-1}{2}} \|_{L^3}^2 +  C_a(\gamma)
+ 2(C+1) \|\bar \rho\|_{L^\infty}
+ 2 C_S \| \nabla \bc \|_{L^3}   + \frac 12
\end{equation}
given the constant
\begin{equation}
C_a(\gamma)\coloneqq \left\{ \begin{array}{ccc}
                               C_S C_Y (\sqrt{2} \gamma)^{\gamma+1}
                               &\text{if}& 1< \gamma < 2 \\[5pt]
                               \tilde C_S C_Y^\prime &\text{if}& 2 \leq \gamma \leq 3
  % C_S^2 \| \brho^{\frac{\gamma-1}{2}} \|_{L^3}^2 +  \tilde C_S C_Y + (C+1) \|\bar \rho\|_\infty + C_S \| \nabla \bc \|_{L^3}   +1
%   \left\{ \begin{array}{ccc}
%             C_S^2 \gamma^2 \| \brho^{\frac{\gamma-1}{2}} \|_{L^3}^2 +  C_S C_Y \gamma^{\gamma+1}
% + (C+1) \|\bar \rho\|_\infty
% + C_S \| \nabla \bc \|_{L^3}   + \frac 14
% &\text{if}& 1< \gamma < 2 \\[5pt]
%                 C_S^2 \gamma^2\| \brho^{\frac{\gamma-1}{2}}  \|_{L^3}^2 + \tilde C_S C_Y^\prime + \tilde C_S C_Y  + C_S \| \nabla \bc  \|_{L^3}
%                  + C  \| \brho \|_{L^\infty} +1 &\text{if}& 2 \leq \gamma \leq 3,
     %             C_S^2 \| \brho^{\frac{\gamma-1}{2}} \|_{L^3}^2
%+ \|\bar \rho\|_\infty + C_Y
%+ C_S \| \nabla \bc \|_{L^3}   + C  \| \brho \|_{L^\infty}  + 1
%&:& \frac32 \leq \gamma \leq 2,\\

               \end{array}\right. .
\end{equation}

Then, we can integrate \eqref{eq:estgk} in time from $0$ to $T'$ and  %\nk{Wird hier tatsächlich Youngs Ungleichung benutzt? Ich denke, es fehlt vor dem letzten Term ein $B=\tilde C_S C_Y + C_S C_Y$} to
obtain
\begin{equation}\label{near:gronk}
 z_1(T') + \int_0^{T'} z_2(t) dt
 \leq
 z_1(0)  + \int_0^{T'} a_\gamma(t) z_1(t)  +\| R_\rho\|_{H^{-1}}^2 + B z_1(t)^{1+ \beta}  dt
\end{equation}
with
\begin{equation}
  B \coloneqq C_S C_Y (\sqrt{2} \gamma)^{\gamma+1} + \tilde C_S C_Y^\prime, \qquad
  \beta \coloneqq %\frac{\gamma+1}{2(\gamma-1)} -1
  \left\{ \begin{array}{ccc}
              \frac{3-\gamma}{2(\gamma-1)}
&\text{if}& 1 < \gamma < 2\\
             \frac{\gamma - 1}{2}
&\text{if}& 2 \leq \gamma \leq 3
               \end{array}\right. .
\end{equation}

 Equation \eqref{near:gronk} shows that our analysis fits into the framework of \cite[Prop 6.2]{Bartels}
 with $y_1:= z_1$, $y_2:= z_2$, 
 \[
 y_3 = Bz_1(t)^{1+\beta}, \qquad A:= z_1(0)  + \int_0^{T} \| R_\rho\|_{H^{-1}}^2  dt,
 \qquad E_\gamma := \exp\left(  \int_0^{T}  a_\gamma(t) dt\right),
\]
$a_\gamma$, $\beta$ and $B$ as above. Then, we have the following conditional error estimate.

\begin{theorem}[Stability estimate for nonlinear diffusion]
  Let either $d=1,2$ and $1<\gamma\leq3$ or $d=3$ and $7/5\leq \gamma \leq 3$. Suppose that $\rho \in H^1(0, T; H^{-1}(\rT^d)) \cap L^2(0, T; H^{1}(\rT^d))$ and $c\in C(0, T; H^{1}(\rT^d))$ constitute a weak solution of the power-law diffusion Keller--Segel model \eqref{KS}. Furthermore, for $\brho \in H^1(0, T; H^{-1}(\rT^d)) \cap L^2(0, T; H^{2}(\rT^d)) \cap L^1(0,T; L^\infty(\rT^d))$ let $R_\rho \in L^2(0,T; H^{-1}(\rT^d))$ and $\bar c \in L^2(0, T; H^{2}(\rT^d)) \cap C(0, T; H^{1}(\rT^d))$ satisfy \eqref{KSn-p} in terms of a strong solution. Then, provided
\begin{equation}\label{eq:stabilitycondition}
%%% Constants need to be checked  %%%
  8 A E_\gamma (8 B  (1 +T) E_\gamma)^{\frac1\beta} \leq 1
\end{equation}
is satisfied, it holds
\begin{multline}\label{mainestimate}
 \sup_{t \in [0,T]}  \frac 1 2 \|  c (t,\cdot) - \bc (t,\cdot)\|_{H^1}^2
 + \frac 1 2 \int_0^T   \int_{\mathbb{T}^d}    (\rho^\gamma - \brho^\gamma) (\rho - \brho)
 dxdt
 \\
 \leq 8 \left(   \frac 1 2 \|  c (0,\cdot) - \bc(0,\cdot)\|_{H^1}^2 + \int_0^{T} \| R_\rho\|_{H^{-1}}^2  dt   \right)
 \exp \left( \int_0^T a_\gamma(t) dt\right).
\end{multline}
\end{theorem}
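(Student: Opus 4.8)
The derivation required for this theorem has, in essence, already been assembled in the display chain culminating in \eqref{near:gronk}; the plan is therefore to check that \eqref{near:gronk} is a genuine instance of the generalized Gronwall inequality and to invoke \cite[Prop 6.2]{Bartels}. I would organize the argument in three stages: first produce the energy identity \eqref{eq:defI}, then absorb the four nonlinear terms $I_1,\dots,I_4$ into the dissipation $z_2$ and into admissible powers of $z_1$ so as to reach \eqref{eq:estgk}, and finally integrate and apply the conditional Gronwall lemma. The starting point is to subtract \eqref{KSn-p} from \eqref{KS} and test with $c-\bc$. The only delicate point in stage one is justifying that the time-derivative term equals $\frac{d}{dt}z_1$: using the elliptic relation $\rho-\brho=(I-\Delta)(c-\bc)$ one has $\partial_t(c-\bc)=(I-\Delta)^{-1}\partial_t(\rho-\brho)\in L^2(0,T;H^1)$, since $(I-\Delta)^{-1}\colon H^{-1}\to H^1$ is an isomorphism and $\partial_t(\rho-\brho)\in L^2(0,T;H^{-1})$ by hypothesis. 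Hence $t\mapsto z_1(t)=\tfrac12\|c-\bc\|_{H^1}^2$ is absolutely continuous with $\frac{d}{dt}z_1=\langle\partial_t(\rho-\brho),\,c-\bc\rangle_{H^{-1},H^1}$, which is the nonlinear analogue of the regularity input flagged through \cite[Theorem 9]{Dragomir2003} in Theorem~\ref{thm:stab1} and is what makes the $\sup_t$ on the left-hand side of \eqref{mainestimate} meaningful.

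For the second stage the route is already laid out term by term: bound $I_1$ via the pointwise splitting \eqref{eq:mean}, the coercivity lower bound \eqref{eq:Flowerbound}--\eqref{eq:z2lower} (which converts density differences into $z_2$), Lemma~\ref{lem:tech3} for the cross term $|\rho^{\gamma-1}-\brho^{\gamma-1}|\,|\rho-\brho|$, and the embeddings $H^1\hookrightarrow L^6,L^{\gamma+1}$; bound $I_2$ by isolating the benign part $2\|\brho\|_{L^\infty}z_1$ and controlling the remainder through a Sobolev embedding, elliptic regularity and \eqref{eq:z2lower}; bound $I_3$ by the divergence-form rewriting together with $\|\Delta\bc\|_{L^\infty}\le\|\brho\|_{L^\infty}$; and bound $I_4$ by Cauchy--Schwarz. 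Young's inequality then routes each nonlinear factor either into a small multiple of $z_2$ (the two $\tfrac38 z_2$ contributions, which must stay strictly below the $2z_2$ on the left), into a linear $a_\gamma(t)z_1$ term, or into one of the two superlinear powers $z_1^{(\gamma+1)/2}$ and $z_1^{\frac{\gamma+1}{2(\gamma-1)}}$. The elementary inequality $a^\alpha<a+a^\beta$ then fuses these two powers into the single term $Bz_1^{1+\beta}$ appearing in \eqref{near:gronk}.

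The third stage is immediate: integrating \eqref{eq:estgk} over $[0,T']$ yields \eqref{near:gronk}, which matches \cite[Prop 6.2]{Bartels} under the identifications $y_1=z_1$, $y_2=z_2$, $y_3=Bz_1^{1+\beta}$, and $A,E_\gamma,a_\gamma,\beta,B$ as defined above. The smallness hypothesis required by that lemma is exactly \eqref{eq:stabilitycondition}, and its conclusion $\sup_t y_1+\int_0^T y_2\,dt\le 8AE_\gamma$, rewritten through $2z_2=\int(\rho^\gamma-\brho^\gamma)(\rho-\brho)\,dx$ and $z_1=\tfrac12\|c-\bc\|_{H^1}^2$, is precisely \eqref{mainestimate}.

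I expect the genuine obstacle to be the $I_2$ estimate. Controlling $\int(\rho-\brho)\,|\nabla(c-\bc)|^2\,dx$ forces the Sobolev embedding $W^{2,\gamma+1}\hookrightarrow W^{1,\frac{2\gamma+2}{\gamma-1}}$, whose validity in $d=3$ requires $\gamma\ge 7/5$; this is exactly the constraint that pins down the admissible $\gamma$-range stated in the hypotheses, and it is what distinguishes this argument from the $\gamma=1$ case of Theorem~\ref{thm:stab1}. The second, milder risk is purely bookkeeping: one must keep the two $\tfrac38 z_2$ terms below the $2z_2$ on the left and correctly read off $B$ and $\beta$ from the two superlinear exponents, since any slip there would destroy the match with the conditional Gronwall lemma on which the whole conclusion rests.
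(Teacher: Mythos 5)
Your proposal is correct and follows essentially the same route as the paper: the same energy identity obtained by testing with $c-\bc$, the same term-by-term estimates of $I_1,\dots,I_4$ (including the pivotal Sobolev embedding $W^{2,\gamma+1}\hookrightarrow W^{1,\frac{2\gamma+2}{\gamma-1}}$ that forces $\gamma\geq 7/5$ when $d=3$), the same merging of the two superlinear powers into $Bz_1^{1+\beta}$, and the same invocation of the conditional Gronwall lemma with identical identifications of $A$, $E_\gamma$, $B$, $\beta$. Your explicit justification that $\frac{d}{dt}z_1=\langle\partial_t(\rho-\brho),c-\bc\rangle_{H^{-1},H^1}$ via the isomorphism $(I-\Delta)^{-1}\colon H^{-1}\to H^1$ is a welcome addition that the paper leaves implicit.
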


\begin{remark}
 To make the estimate for the density difference resulting from \eqref{mainestimate} more transparent, we note that it can be expressed in the style of what is done for the gradient in $p$-Laplace problems \cite{Liu2001} and called \textit{quasi-norm}, i.e.
 \[
    |\rho^{\frac{\gamma+1}{2}} - \brho^{\frac{\gamma+1}{2}}|^2 \leq    (\rho^\gamma - \brho^\gamma) \, (\rho - \brho)
    \leq \frac{\gamma+1}{2}  |\rho^{\frac{\gamma+1}{2}} - \brho^{\frac{\gamma+1}{2}}|^2.
 \]
\end{remark}

\section{A positivity-preserving finite volume scheme}\label{sec:scheme}
We consider a discretization of the time domain with step sizes $\Delta t^n>0$, time instances $t^n=\sum_{i=1}^n \Delta t^i$ and a discretization of $\mathbb{T}^d$ for $d\in\{1,2\}$ on a grid with the mesh cells $K_1$, \dots, $K_N$. In the case $d=2$ the cells form an equidistant Cartesian mesh, whereas in the case $d=1$ we allow for nonuniform interval cells, details are given below. By $\rho_h^n$ we denote a piecewise constant approximation of \eqref{KS} at time $t^n$ consisting of the cell averages $\rho_i^n$.

\paragraph{The 1D scheme} In the case $d=1$ we introduce the cell interfaces $x_{i+1/2}$ for $i=0,1,\dots,N$, such that
$h_i= x_{i+1/2} - x_{i-1/2}>0$, $x_{-1/2}=0$ and $x_{N+1/2}=1$ and define the mesh cells $K_i=[x_{i-1/2}, x_{i+1/2}]$. 
We also define cell-midpoints $x_i:= \tfrac12 (x_{i+1/2} + x_{i-1/2})$ and define the function spaces
\begin{align}
 R_h &:= \{ r \in L^2( \rT^1, \mathbb{R})\,: \,  r|_{(x_{i-1/2} , x_{i+1/2} )} \text{ constant} \quad \forall i \},\\
 V_h &:= \{ v \in H^1(\rT^1, \mathbb{R})\,: \,  v|_{(x_{i} , x_{i+1} )} \text{ linear} \quad \forall i \},\\
 \tilde V_h &:= \{ v \in H^1(\rT^1, \mathbb{R})\,: \,  v|_{(x_{i-1/2} , x_{i+1/2} )} \text{ linear} \quad \forall i \}.
\end{align}

Starting from $\rho_h^0 \in R_h $ we compute successively $c_h^n \in V_h$ and $\rho_h^{n+1}\in R_h $
by solving
\begin{equation}\label{eq:cfem}
 \int_{\rT^1} \nabla c^n_h \nabla v_h + c_h^n v_h \, dx  = \int_{\rT^1} \tilde \rho^n  v_h \, dx \quad \forall v_h \in V_h,
\end{equation}
where $\tilde \rho^n \in V_h$ is determined by $\tilde \rho^n(x_i)=\rho^n_i$, the latter being given by the scheme
\begin{equation}\label{eq:scheme}
  \rho_i^{n+1} 
  = \rho_i^{n}
  -  \frac {\Delta t^n}{h_i} \left( \mathcal{F}_{i+1/2}^{n} - \mathcal{F}_{i-1/2}^{n}\right)
  + \frac{\Delta t^n}{h_i} \left(\mathcal{D}_{i+1/2}^{n,n+1} -  \mathcal{D}_{i-1/2}^{n,n+1}\right), %
\end{equation}
where $\rho_i^n$ denotes the value of $\rho_h^n$ on $(x_{i-1/2} , x_{i+1/2} )$.
% \begin{equation}
%   \frac {\rho_i^{n+1}-\rho_i^{n}}{\Delta t^n} + \frac {1}{h} \left[ \mathcal{F}_{i+1/2}^{n} - \mathcal{F}_{i-1/2}^{n}\right]
%     % - ((\bar \rho_{i+1/2}^n )^{\gamma-1} + (\bar \rho_{i-1/2}^n )^{\gamma-1}) \rho_{i}^{n+1}
%    = \Delta_h^\gamma[\rho_h^n, \rho_h^{n+1}] \rvert_{K_i},
%  \end{equation}
The numerical fluxes accounting for nonlinear diffusion are defined by
\begin{equation}\label{eq:dflux}
\mathcal{D}_{i+1/2}^{n, n+1} =  \gamma \, (\hat \rho_{i+1/2}^{n} )^{\gamma-1} \, \frac{\rho_{i+1}^{n+1} - \rho_{i}^{n+1}}{d_{i+1/2}}, 
\end{equation}
where we employ the averages
\begin{equation}
  \hat \rho_{i+1/2}^{n} = \frac {\rho_{i+1}^{n} + \rho_i^{n} }{2}.
\end{equation}
%Let $x_i$ denote the center of the cell $K_i$, then
and $d_{i+1/2}:=\frac{h_i + h_{i+1}}{2}$ refers to the distance between $x_i$ and $x_{i+1}$. For brevity of notation in the following computations we further introduce the notation
\begin{equation}\label{eq:dgamma}
   \Delta_h^\gamma[\rho_h^n, \rho_h^{n+1}] \rvert_{K_i} = \frac{1}{h_i} \left(\mathcal{D}_{i+1/2}^{n,n+1} -  \mathcal{D}_{i-1/2}^{n,n+1}\right).
\end{equation}
The advective numerical fluxes are given by
\begin{equation}\label{eq:numflux}
   \mathcal{F}_{i+1/2}^{n} = \partial_x  c_h^n(x_{i+1/2})^+ \rho_i^{n} -  \partial_x  c_h^n(x_{i+1/2})^- \rho_{i+1}^{n}
\end{equation}
with the positive and negative part defined as $x^+=\max\{0, x\}$ and $x^-= - \min\{x, 0\}$.

\paragraph{The 2D scheme} In the case $d=2$ the mesh cells for $\rho_h$ are given by the squares $K_{i}=K_{j,k}=[x_{j-1/2}, x_{j+1/2}] \times [y_{k-1/2}, y_{k+1/2}]$, where the single and double index notation relates as $i=(k-1)n+(j-1)$ for $i,j=1,\dots,n$ and $N=n^2$. In analogy to the 1D scheme we consider $x$- and $y$-coordinates of the cell interfaces in horizontal and vertical direction, which due to the assumption of equidistant cells of side length $h$, have the simple structure $x_{i+1/2} = y_{i+1/2} = (i-1) h$ for $i=0,1,\dots,N$. The time- and space dependent cell averages $\rho_{j,k}^n$ are governed by the scheme
\begin{multline}\label{eq:scheme2d}
  \frac {\rho_{j,k}^{n+1}-\rho_{j,k}^{n}}{\Delta t^n} + \frac{1}{h} \left(  \mathcal{F}_{j+1/2, k}^{n} -  \mathcal{F}_{j-1/2, k}^{n}\right) + \frac{1}{h} \left(  \mathcal{F}_{j, k+1/2}^{n} -  \mathcal{F}_{j, k-1/2}^{n}\right)
\\ = \frac{1}{h} \left( \mathcal{D}_{j+1/2, k}^{n, n+1} -  \mathcal{D}_{j-1/2, k}^{n, n+1} \right) + \frac{1}{h} \left( \mathcal{D}_{j, k+1/2}^{n, n+1} -  \mathcal{D}_{j, k-1/2}^{n, n+1} \right).
\end{multline}
In order to define $c_h$ we introduce a triangulation $\cT_h$ of $\rT^2$ by considering the dual quadrilateral mesh consisting of cells $[x_{j}, x_{j+1}] \times [y_{k}, y_{k+1}]$ and dividing each square into a lower-left and an upper-right triangle.
We define
\[ 
 V_h =\{ v \in H^1(\rT^2, \mathbb{R}) \, : \, v|_T \text{ is linear } \forall T \in \cT_h\}
\]
and $c_h^n \in V_h$ by 
\[
 \int_{\rT^2} \nabla c_h^n \nabla v_h + c_h^n v_h \, dx \, dy = \int_{\rT^2} \tilde \rho^n v_h \, dx \, dy,
\]
where $\tilde \rho^n$ is the unique element of $V_h$ with $\tilde \rho^n(x_j,y_k)=\rho^n_{jk}$.
% \begin{equation}\label{eq:scheme2d}
%   \frac {\rho_{i}^{n+1}-\rho_{i}^{n}}{\Delta t^n} 
%     % - ((\bar \rho_{i+1/2}^n )^{\gamma-1} + (\bar \rho_{i-1/2}^n )^{\gamma-1}) \rho_{i}^{n+1}
%    = \sum_{\alpha\in\{x, y \}}\Delta_h^{\alpha,\gamma}[\rho_h^n, \rho_h^{n+1}] \rvert_{K_i} - \frac {1}{h} \left[ \mathcal{F}_{i+1/2, j}^{\alpha,n} - \mathcal{F}_{i-1/2, j}^{\alpha,n}\right]
% \end{equation}
The numerical fluxes analogue to  \eqref{eq:dgamma} and \eqref{eq:numflux} are direction dependent. To discretize the diffusion terms the numerical fluxes
\begin{align}
\mathcal{D}_{j+1/2, k}^{n, n+1} = \gamma  \, (\hat \rho_{j+1/2, k}^{n} )^{\gamma-1} \, \frac{\rho_{j+1, k}^{n+1} - \rho_{j,k}^{n+1}}{h}, \qquad  \mathcal{D}_{j, k+1/2}^{n, n+1}= \gamma \, (\hat \rho_{j,k+1/2}^{n} )^{\gamma-1} \, \frac{\rho_{j,k+1}^{n+1} - \rho_{j,k}^{n+1}}{h}
\end{align}
are used together with the  averages
\[
  \hat \rho_{j+1/2,k}^{n} = \frac {\rho_{j+1,k}^{n} + \rho_{j,k}^{n} }{2}, \qquad  \hat \rho_{j,k+1/2}^{n} = \frac {\rho_{j,k+1}^{n} + \rho_{j,k}^{n} }{2}.
\]
Like \eqref{eq:dgamma} in the 1D scheme we introduce the abbreviations
\begin{align*}
  \Delta_{x,h}^{\gamma}[\rho_h^n, \rho_h^{n+1}] \rvert_{K_i} &= \frac{1}{h} \left( \mathcal{D}_{j+1/2, k}^{n, n+1} -  \mathcal{D}_{j-1/2, k}^{n, n+1} \right), \\
  \Delta_{y,h}^{\gamma}[\rho_h^n, \rho_h^{n+1}] \rvert_{K_i} &= \frac{1}{h} \left( \mathcal{D}_{j, k+1/2}^{n, n+1} -  \mathcal{D}_{j, k-1/2}^{n, n+1} \right).
\end{align*}
To discretize the advection terms, we denote the centers of the intervals $[x_{j-1/2}, x_{j+1/2}]$ and $[y_{k-1/2}, y_{k+1/2}]$ by $x_j$ and $y_k$, respectively and use the numerical fluxes
\begin{align*}
  \mathcal{F}_{j+1/2, k}^{n} &=  \partial_x c_h^n (x_{j+1/2}, y_k)^+ \rho_{j,k}^{n} -  \partial_x c_h^n(x_{j+1/2}, y_k)^- \rho_{j+1,k}^{n}, \\
   \mathcal{F}_{j, k+1/2}^{n} &=  \partial_y c_h^{n} (x_{j}, y_{k+1/2})^+ \rho_{j,k}^{n} - \partial_y c_h^{n} (x_{j}, y_{k+1/2})^- \rho_{j,k+1}^{n},
\end{align*}
where the points $(x_{j+1/2}, y_k)$ and $(x_{j}, y_{k+1/2})$ are located at the center of an interface between two adjacent mesh cells.
Note that  $\partial_x c_h^n (x_{j+1/2}, y_k)$ and $\partial_y c_h^{n}(x_{j}, y_{k+1/2})$ are well defined.

\begin{remark} It is not clear, whether our definition of the reconstruction $\tilde \rho$ achieves optimal a posteriori estimates for the proposed finite volume scheme. An alternative would be a biliner reconstruction on a quadrilateral mesh.
\end{remark}

The introduced finite volume scheme employs an implicit discretization of the nonlinear diffusion terms. However, a time step of the scheme does not require the solution of a non-linear system, only a linear system needs to be solved with system matrix depending on the current numerical solution. 

The following theorem states an important property of the scheme.
\begin{theorem}\label{thm:positivity}
  Suppose that the CFL condition
  \begin{equation}\label{eq:cfl}
    \Delta t \leq \min_{1\leq i \leq N} \frac{h_i}{a_i^n},
  \end{equation}
  holds for all $n\in\mathbb{N}_0$, where $a_i^n = |\ddx c(t^n, x_{i-1/2})^- + \ddx c(t^n, x_{i+1/2})^+ |$ in the case $d=1$ and
  \begin{align*}
    a_i^n &= |\ddx c(t^{n}, (x_{j+1/2}, y_k))^+ + \ddx c(t^{n}, (x_{j-1/2}, y_k))^- \\
    &\quad + \ddy c(t^{n}, (x_{j}, y_{k+1/2}))^+ + \ddy c(t^{n}, (x_{j}, y_{k-1/2}))^-| 
  \end{align*}
in the case $d=2$. Then the finite volume scheme given by \eqref{eq:scheme} if $d=1$ and \eqref{eq:scheme2d} if $d=2$ is positivity preserving, i.e., if the initial data satisfies $\rho_h^0\geq 0$ we have $\rho_h^n \geq 0$ for all $n\in\mathbb{N}_0$.
\end{theorem}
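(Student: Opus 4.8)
The plan is to argue by induction on the time level $n$, establishing the inductive step by combining a discrete minimum principle for the implicit diffusion part with CFL-controlled positivity of the explicit advection part. I focus on the one-dimensional scheme \eqref{eq:scheme}; the two-dimensional scheme \eqref{eq:scheme2d} is treated analogously, replacing the three-point stencil by the corresponding five-point stencil. The base case $\rho_h^0\geq0$ is the hypothesis, so it remains to show that $\rho_h^n\geq0$ implies $\rho_h^{n+1}\geq0$.

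First I would recast one step as a linear system for the vector $\rho^{n+1}=(\rho_i^{n+1})_i$. Moving the implicit diffusion fluxes to the left and the explicit advection fluxes to the right, \eqref{eq:scheme} reads $M\rho^{n+1}=b$, where, abbreviating $A_{i\pm1/2}=\gamma(\hat\rho_{i\pm1/2}^n)^{\gamma-1}/d_{i\pm1/2}$, the matrix $M$ has diagonal entries $1+\tfrac{\Delta t^n}{h_i}(A_{i+1/2}+A_{i-1/2})$ and off-diagonal entries $-\tfrac{\Delta t^n}{h_i}A_{i\pm1/2}$, while $b_i=\rho_i^n-\tfrac{\Delta t^n}{h_i}(\mathcal{F}_{i+1/2}^n-\mathcal{F}_{i-1/2}^n)$. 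Since $\rho_h^n\geq0$ by hypothesis and $\gamma\geq1$, all the $A_{i\pm1/2}$ are nonnegative, so $M$ has positive diagonal, nonpositive off-diagonal entries, and is strictly diagonally dominant (the diagonal exceeds the absolute off-diagonal row sum by exactly $1$). In particular $M$ is invertible, which simultaneously settles well-posedness of the time step.

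Second, I would show that the CFL condition \eqref{eq:cfl} forces $b\geq0$. Writing $u_{i+1/2}=\ddx c_h^n(x_{i+1/2})$ and expanding the upwind fluxes \eqref{eq:numflux}, a direct rearrangement gives
\[
 b_i = \left(1 - \frac{\Delta t^n}{h_i}(u_{i+1/2}^+ + u_{i-1/2}^-)\right)\rho_i^n + \frac{\Delta t^n}{h_i}u_{i+1/2}^-\rho_{i+1}^n + \frac{\Delta t^n}{h_i}u_{i-1/2}^+\rho_{i-1}^n.
\]
The last two summands are products of nonnegative quantities and hence nonnegative; the first is nonnegative precisely because $u_{i+1/2}^+ + u_{i-1/2}^- = a_i^n$ (the absolute value in \eqref{eq:cfl} being redundant for a sum of nonnegative terms) and $\Delta t^n\leq h_i/a_i^n$. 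This is the one genuine computation, and the step I expect to require the most care, since it hinges on matching the positive/negative-part bookkeeping in \eqref{eq:numflux} exactly to the definition of $a_i^n$.

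Finally, with $b\geq0$ in hand, I would deduce $\rho^{n+1}\geq0$ by a discrete minimum principle rather than by inverting $M$. Let $m=\min_i\rho_i^{n+1}$, attained at some index $i_0$ (the minimum exists since there are finitely many cells, and periodicity causes no difficulty). Solving row $i_0$ of $M\rho^{n+1}=b$ for $\rho_{i_0}^{n+1}$ and using $b_{i_0}\geq0$ together with $\rho_{i_0\pm1}^{n+1}\geq m$ yields $(1+D)m\geq Dm$ with $D=\tfrac{\Delta t^n}{h_{i_0}}(A_{i_0+1/2}+A_{i_0-1/2})\geq0$, whence $m\geq0$. This closes the induction. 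For the two-dimensional scheme the same three steps apply with the stencil enlarged to include the $y$-neighbours, the coefficients $A$ indexed by the four interfaces of each square, and $a_i^n$ the two-directional sum of upwind slopes in \eqref{eq:cfl}.
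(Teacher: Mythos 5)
Your proof is correct and follows essentially the same route as the paper: both rewrite the implicit step as a linear system $(I-\Delta t\, A^n)\rho_h^{n+1}=b$ whose matrix has positive diagonal and nonpositive off-diagonal entries (with row excess exactly $1$), and both use the upwind flux structure together with the CFL condition \eqref{eq:cfl} to show $b\geq 0$, your exact identity for $b_i$ being just a sharper form of the paper's inequality \eqref{eq:positiverhs}. The only difference is the final step, where the paper concludes $\rho_h^{n+1}\geq 0$ by citing the $M$-matrix property, while you prove the same inverse-positivity directly via a discrete minimum principle at the minimizing cell --- a self-contained, equally valid finish.
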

\begin{proof}
  We prove this result inductively and assume $\rho_h^n \geq 0$. If $d=1$ we obtain due to $|\ddx c| \geq \ddx c^+, \ddx c^- \geq 0$ and \eqref{eq:cfl} the estimate
  \begin{align}\label{eq:positiverhs}
    \rho_i^n - &\frac{\Delta t}{h_i}\left[ \mathcal{F}_{i+1/2}^{n} - \mathcal{F}_{i-1/2}^{n}\right] \notag
    \\&= \rho_i^n - \frac{\Delta t}{h_i}\left[  \ddx c(t^{n}, x_{i+1/2})^+ \rho_i^{n} -  \ddx c(t^{n}, x_{i+1/2})^- \rho_{i+1}^{n} \right. \notag \\
    &\qquad \left. -  \ddx c(t^{n}, x_{i-1/2})^+ \rho_{i-1}^{n} +  \ddx c(t^{n}, x_{i-1/2})^- \rho_{i}^{n}\right] \notag
    \\ &\geq \rho_i^n \left( 1- \frac{\Delta t}{h_i} \, \left[ \ddx c(t^{n}, x_{i+1/2})^+ + \ddx c(t^{n}, x_{i-1/2})^- \right] \right) \notag\\
    &\geq \rho_i^n \left( 1- \frac{\Delta t}{h_i} a_i^n\right) \geq 0.
  \end{align}
  The scheme \eqref{eq:scheme} can be rewritten as
  \begin{equation}
    \rho_i^{n+1} -  \Delta t^n \Delta_h^\gamma[\rho_h^n, \rho_h^{n+1}] \rvert_{K_i}
   = \rho_i^{n} - \frac {\Delta t}{h_i} \left[ \mathcal{F}_{i+1/2}^{n} - \mathcal{F}_{i-1/2}^{n}\right] \qquad i=1,\dots,N,
 \end{equation}
 which gives rise to the vector form
 \[
   (I - \Delta t A^n) \rho_h^{n+1} = r(\rho_h^n)
   \]
   with $A^n\in\mathbb{R}^{N\times N}$. By the nonnegativity of $\rho_h^n$ also $\hat \rho_{i+1/2}^{n}$ is nonnegative for $i=1,\dots,N$ and thus the matrix $A^n$ has nonpositive diagonal and nonnegative off-diagonal entries. In addition, the entries of each line of $A^n$ add up to zero. For these reasons $I - \Delta t A^n$ is an $M$ matrix and the nonnegativity of $r(\rho_h^n)$ due to \eqref{eq:positiverhs} implies $\rho_h^{n+1}\geq 0$.

   The argument can be transferred to the case $d=2$ in a straight-forward manner, in particular the entries of the right hand side of the linear system are bounded by
  \begin{align}
    \rho_{j,k}^n - &\frac{\Delta t}{h}\left[ \mathcal{F}_{j+1/2,k}^{n} - \mathcal{F}_{j-1/2,k}^{n} + \mathcal{F}_{j,k+1/2}^{n} - \mathcal{F}_{j,k-1/2}^{n}\right] \notag
    \\ &\geq \rho_{j,k}^n \Big( 1- \frac{\Delta t}{h} \, \left[ \ddx c(t^{n}, (x_{j+1/2}, y_k))^+ + \ddx c(t^{n}, (x_{j-1/2}, y_k))^- \right]  \notag \\
    & \quad - \frac{\Delta t}{h} \left( \ddy c(t^{n}, (x_{j}, y_{k+1/2}))^+   + \ddy c(t^{n}, (x_{j}, y_{k-1/2}))^- \right) \Big)  \notag
  \\ &\geq \rho_{j,k}^n \left( 1- \frac{\Delta t}{h} a_{j,k}^n\right) \geq 0
  \end{align}
  using the CFL condition \eqref{eq:cfl}.
\end{proof}

\section{Residual estimates for the 1D scheme}

%Let $x_1, \dots x_N$ denote the centers of the mesh cells $K_1$, \dots, $K_N$. We define $\tilde \rho^n$ as the piecewise linear finite element function with nodes $x_1,\dots,x_N$ interpolating $\rho_1^n,\dots,\rho_N^n$.
To obtain a posteriori estimates for the scheme introduced in Section~\ref{sec:scheme} we employ the stability framework established in Section~\ref{sec:stability}, i.e. we evaluate the stability estimates
%in the residual terms
on  approximate solutions $\tilde \rho $ and $ \tilde c$ that are obtained by suitable reconstructions, to be detailed below, of $\rho_h$ and $c_h$. In this and the following section we provide the required computable bounds of the residual $R_\rho$ occurring
%in the estimates \eqref{eq:classicmainestimate} and \eqref{mainestimate}.
upon inserting $\tilde \rho$ and $ \tilde c$ into the power-law diffusion Keller-Segel system.

To extend the reconstructions and the numerical solution in time we introduce the temporal interpolations
\begin{equation}
\aligned
  \tilde \rho(t,x) = \ell_0^n(t) \tilde \rho^{n+1}(x) + \ell_1^n(t)  \tilde \rho^{n}(x), \qquad t \in[t^{n},t^{n+1}]\\
    \rho_h(t,x) = \ell_0^n(t) \rho_h^{n+1}(x) + \ell_1^n(t)  \rho_h^{n}(x), \qquad t \in[t^{n},t^{n+1}]
   \endaligned
\end{equation}
using the Lagrange polynomials
\[
\ell_0^n(t) = \frac{t-t^n}{\Delta t}, \qquad \ell_1^n(t) = \frac{t^{n+1}-t}{\Delta t}.
\]
% Similarly, we consider the interpolants
% \begin{equation}
%   \rho_i(t) = \ell_0^n(t) \tilde \rho_i^{n+1} + \ell_1^n(t)  \tilde \rho_i^n, \qquad t \in[t^{n},t^{n+1}]
% \end{equation}
% We easily see that
% \begin{equation}\label{eq:laplacetilderho}
%   \frac{1}{|K_i|}\int_{K_i} \Delta \tilde \rho  \, dx = \frac{\rho_{i+1} - 2 \rho_i + \rho_{i-1}}{h^2}.
% \end{equation}
By this definition we have
\begin{equation}\label{eq:ddttilderho}
  \ddt \tilde \rho = \frac{\tilde \rho^{n+1} - \tilde \rho^n}{\Delta t^n}, \qquad t \in (t^{n},t^{n+1}).
\end{equation}
  Next, we introduce the reconstruction $\tilde c$ as the solution to the elliptic equation
  \begin{equation}\label{eq:reconstructionc}
    \tilde c - \Delta \tilde c = \tilde \rho,
  \end{equation}
  which allows us to define the residual
  \begin{equation}
    \tilde R_\rho \coloneqq \ddt \tilde \rho + \nabla \cdot (\tilde \rho \nabla \tilde c) -  \Delta \tilde \rho^\gamma.
  \end{equation}

  In the following, we aim to find a bound for
  \[
    \int_0^T \| \tilde R_\rho \|_{H^{-1}(\rT^1)}^2 \, dt =  \int_0^T \sup_{\substack{\phi \in H^1(\Td)\\ \|\phi \|_{H^1} \leq 1}} \left( \int_{\rT^1}  \tilde R_\rho  \phi \, dx \right)^2 dt.
  \]
  Estimates are first discussed for the non-equidistant scheme in 1D. A generalization to the 2D scheme on Cartesian meshes is discussed in Section \ref{sec:2D}.
  % We analyze the cell averages of the residual over the finite volume mesh, i.e.,
  % \begin{equation}
  %   \tilde R_i \coloneqq \frac{1}{|K_i|} \int_{K_i} \tilde R \, dx.
  %  \end{equation}
  Fixing $t \in[t^{n},t^{n+1}]$ we find that
  \begin{equation}\label{eq:localizedresidual}
    \begin{split}
      \tilde R_\rho  &= \frac{\tilde \rho^{n+1} - \tilde \rho^n}{ \Delta t^n}+ \nabla \cdot (\tilde \rho \nabla \tilde c)
                              -   \Delta \tilde \rho^\gamma 
      \\  &=  \frac{\tilde \rho^{n+1} - \tilde \rho^n}{ \Delta t^n} + \ddx((\ell_0^n \tilde \rho^{n+1} + \ell_1^n \tilde \rho^n) (\ell_0^n \ddx \tilde c^{n+1} + \ell_1^n \ddx \tilde c^{n}))
      \\ &\quad - \Delta \tilde \rho^\gamma + %\frac{\rho_{i+1}^{n+1} - 2 \rho_i^{n+1} + \rho_{i-1}^{n+1}}{h^2}
          \ell_0^n \Delta_h^\gamma[\rho_h^n, \rho_h^{n+1}] + \ell_1^n \Delta_h^\gamma[\rho_h^{n-1}, \rho_h^{n}]  
      \\ & \quad - \frac{\ell_0^n(t)}{h} \left(  d\mathcal{F}^n_h \right) - \frac{\ell_1^n(t)}{h} \left(  d\mathcal{F}_{h}^{n-1}  \right)
     -  \ell_0^n(t) \frac{\rho_h^{n+1} - \rho_h^n}{\Delta t^n} - \ell_1^n(t) \frac{\rho_h^{n} - \rho_h^{n-1}}{\Delta t^n},
    \end{split}
  \end{equation}
  where we have used \eqref{eq:ddttilderho}, the scheme \eqref{eq:scheme}, and the piecewise constant function $ d\mathcal{F}^n_h|_{K_i}:=
  \mathcal{F}_{i+1/2}^{n} - \mathcal{F}_{i-1/2}^{n}$. To estimate the residual we split it as $ \tilde R_\rho  = \tilde R^1 + \tilde R^2 + \tilde R^3$,
such that
  \begin{align}
    \tilde R^1&\coloneqq    \ell_0^n \Delta_h^\gamma[\rho_h^n, \rho_h^{n+1}] + \ell_1^n \Delta_h^\gamma[\rho_h^{n-1}, \rho_h^{n}]  - \Delta \tilde \rho^\gamma, \\
    \tilde R^2&\coloneqq  \frac{\tilde \rho^{n+1} - \tilde \rho^n}{ \Delta t^n} -  \ell_0^n(t) \frac{\rho_h^{n+1} - \rho_h^n}{\Delta t^n} - \ell_1^n(t) \frac{\rho_h^{n} - \rho_h^{n-1}}{\Delta t^{n-1}}, \label{eq:r2} \\
    \tilde R^3&\coloneqq  \ddx((\ell_0^n \tilde \rho^{n+1} + \ell_1^n \tilde \rho^n) (\ell_0^n \ddx \tilde c^{n+1} + \ell_1^n \ddx \tilde c^{n}))  %\nabla \cdot (\tilde \rho \nabla \tilde c)
                  - \frac{\ell_0^n(t)}{h_i}d \mathcal{F}_{h}^{n} - \frac{\ell_1^n(t)}{h_i} d \mathcal{F}_{h}^{n-1}.
  \end{align}

  \subsection{First part of the residual}
We first consider the constituent $\tilde R^1$ of the residual and rewrite it as
\begin{align}
  \tilde R^1 &= \ell_0^n \left( \Delta_h^\gamma[\rho_h^{n}, \rho_h^{n+1}] - \partial_x\left( \gamma (\tilde \rho^{n})^{\gamma -1} \partial_x\tilde \rho^{n+1} \right)\right)+ \ell_1^n \left(\Delta_h^\gamma[\rho_h^{n-1}, \rho_h^{n}]  - \partial_x\left( \gamma (\tilde \rho^{n-1})^{\gamma -1} \partial_x \tilde \rho^{n} \right) \right)  \notag %\Delta \tilde \rho^\gamma \notag
\\  &\quad + \ell_1^n \partial_x \left( (\tilde \rho^{n-1})^{\gamma-1} \partial_x \tilde \rho^{n}  -  (\tilde \rho^n)^{\gamma-1} \tilde \rho_x^{n+1} \right)
 + \partial_x \left( (\tilde \rho^n)^{\gamma-1} \partial_x \tilde \rho^{n+1} - (\tilde \rho)^{\gamma-1} \partial_x \tilde \rho \right)\label{eq:R1largegamma1}.
\end{align}
Note that the latter two terms vanish in the case $\gamma=1$. The sum \eqref{eq:R1largegamma1} is estimated in $H^{-1}(\rT)$ on a term by term basis. To this end we fix $\phi\in H^1(\rT)$ with $\| \phi \|_{H^1(\rT)}\leq 1$.

We estimate the first two terms of \eqref{eq:R1largegamma1} in $H^{-1}(\rT)$ rearranging the summation and integrating by parts. Therefore, we compute %we proceed in analogy to \eqref{eq:R1gamma1} using both the Cauchy-Schwartz and the Hölder inequality and obtain in the case $d=1$ the estimate
  \begin{align}\label{eq:diffresidual1}
    & \int_{\rT^1}\left( \Delta_h^\gamma[\rho_h^{n}, \rho_h^{n+1}]  - \gamma \partial_x\left[(\tilde \rho^{n})^{\gamma-1}\partial_x \tilde \rho^{n+1} \right] \right) \phi\, dx \notag
   % \\&= \sum_{i=1}^N  \frac {\gamma}{h^2} (\hat \rho_{i+1/2}^n)^{\gamma-1} (\rho_{i+1}^{n+1} - \rho_{i}^{n+1}) \left( \int_{K_i} \phi \, dx- \int_{K_{i+1}} \phi \, dx \right) + \int_{K_i}  \gamma (\rho^n)^{\gamma -1}  \rho^{n+1}_x  \phi_x \, dx \notag
    \\ &= \sum_{i=1}^N    \mathcal{D}^{n, n+1}_{i+1/2}  \left( \frac{1}{h_i} \int_{K_i} \phi \, dx- \frac{1}{h_{i+1}}\int_{K_{i+1}} \phi \, dx \right) \notag
         + \sum_{i=1}^N \int_{x_i}^{x_{i+1}} \gamma (\hat \rho_{i+1/2}^n)^{\gamma-1} \partial_x \tilde \rho^{n+1}  \partial_x\phi \, dx
    \\& \quad + \sum_{i=1}^N \int_{x_i}^{x_{i+1}} \gamma \left( (\tilde \rho^n)^{\gamma -1} - (\hat \rho_{i+1/2}^n)^{\gamma-1} \right) \partial_x \tilde \rho^{n+1}  \partial_x \phi \, dx.
  \end{align}
  For the last sum in \eqref{eq:diffresidual1} we estimate using the piecewise linearity of $\tilde \rho$ and Cauchy Schwartz's inequality
  \begin{align}\label{eq:r1part11}
    &\sum_{i=1}^N \int_{x_i}^{x_{i+1}} \gamma \left( (\tilde \rho^n)^{\gamma -1} - (\hat \rho_{i+1/2}^n)^{\gamma-1} \right) \partial_x \tilde \rho^{n+1}  \partial_x \phi \, dx  \notag \\
    &\quad \leq \gamma  \sum_{i=1}^N \| (\tilde \rho^n)^{\gamma -1} - (\hat \rho_{i+1/2}^n)^{\gamma-1} \|_{L^\infty(x_i, x_{i+1})} \|  \partial_x \tilde \rho^{n+1} \|_{L^2(x_i, x_{i+1})} \|\phi \|_{H^1(x_i, x_{i+1})} \notag \\
    &\quad \leq  \gamma  \left( \sum_{i=1}^N \frac{(\rho_{i+1}^{n+1}-\rho_i^{n+1})^2}{d_{i+1/2}} \, \max_{\ell\in \{i, i+1\}}  \left|(\rho_\ell^n)^{\gamma-1} - (\hat \rho_{i+1/2}^n)^{\gamma-1}\right|^2 \right)^{1/2}. 
  \end{align}
  Since $\partial_x\tilde \rho$ is piecewise constant the integral in the remainder in \eqref{eq:diffresidual1} can be computed and we obtain the bound
  \begin{align}
    \sum_{i=1}^N&    \mathcal{D}^{n, n+1}_{i+1/2}  \left( \frac{1}{h_i} \int_{K_i} \phi \, dx- \frac{1}{h_{i+1}}\int_{K_{i+1}} \phi \, dx \right) + \sum_{i=1}^N  \mathcal{D}^{n, n+1}_{i+1/2}  \left(\phi(x_{i+1}) - \phi(x_i) \right) \notag
    \\&= \sum_{i=1}^N \left( \mathcal{D}^{n, n+1}_{i+1/2} -  \mathcal{D}^{n, n+1}_{i-1/2} \right) \frac{1}{h_i} \int_{K_i} (\phi - \phi(x_i)) \, dx \notag
\\ &\leq \sum_{i=1}^N \frac{1}{\sqrt{h_i}} \left| \mathcal{D}^{n, n+1}_{i+1/2} -  \mathcal{D}^{n, n+1}_{i-1/2} \right| \| \phi - \phi(x_i) \|_{L^2(K_i)} \notag
\\ &\leq \left( \sum_{i=1}^N h_i \left( \mathcal{D}^{n, n+1}_{i+1/2} -  \mathcal{D}^{n, n+1}_{i-1/2} \right)^2 \right)^{1/2} \label{eq:r1part12}%\|\phi - \phi(x_i)\|_{L^2(K_i)}
  \end{align}
using Cauchy Schwartz's inequality. The second part of \eqref{eq:R1largegamma1} is bounded by
  \begin{align}\label{eq:r1term3}
    \int_{\rT} \ddx((\tilde \rho^{n-1})^{\gamma-1} \partial_x \tilde \rho^{n}  &-  (\tilde \rho^n)^{\gamma-1} \partial_x \tilde \rho^{n+1}) \phi \, dx \notag \\
     &\leq \left(\sum_{i=1}^N  \|((\rho^{n-1}_i)^{\gamma-1} \tilde \rho_x^{n}  -  (\rho^n_i)^{\gamma-1} \tilde \rho_x^{n+1})\|_{L^2(K_i)}^2 \right)^{1/2}
    % \\ \leq  \left(\sum_{i=1}^N  \| \rho_x^{n+1}\|_{L^2(K_i)}^2\left((\rho^{n}_i)^{\gamma-1} - (\rho^{n-1}_i)^{\gamma-1}\right)^2 \right)^{1/2}
    % + \left(\sum_{i=1}^N  (\rho^{n-1}_i)^{2\gamma-2} \| \tilde \rho_x^{n+1} - \tilde \rho_x^{n} \|^2_{L^2(K_i)} \right)^{1/2}
  \end{align}
  which is readily computable.
  Considering the last term of \eqref{eq:R1largegamma1} we note that the estimate 
  \begin{align*}
    | (\tilde \rho^n)^{\gamma -1} \partial_x \tilde \rho^{n+1} - (\tilde \rho)^{\gamma-1} \partial_x \tilde \rho|
    &\leq \max\{ (\tilde \rho^n)^{\gamma -1}, (\tilde \rho^{n+1})^{\gamma -1}\} |\partial_x \tilde \rho^{n+1} -
    \partial_x \tilde \rho^n|  \\
    &\quad + |(\tilde \rho^n)^{\gamma -1} - (\tilde \rho^{n+1})^{\gamma -1}) |
|\partial_x \tilde \rho^{n+1}|
  \end{align*}
holds and therefore
  \begin{align}
    \int_\rT  &\partial_x \left( (\tilde \rho^n)^{\gamma-1} \partial_x \tilde \rho^{n+1} - (\tilde \rho)^{\gamma-1} \partial_x \tilde \rho \right) \phi \, dx \notag %
    %&\leq \| (\tilde \rho^n)^{\gamma -1} +  (\tilde \rho^{n+1})^{\gamma -1}\|_{L^{\infty}} \| \tilde \rho^{n+1}_x - \tilde \rho^n_x \| \notag
     % \\ &\quad + \|(\tilde \rho^n)^{\gamma -1} - (\tilde \rho^{n+1})^{\gamma -1}) \|_{L^\infty}  \|\tilde \rho^{n+1}_x\|.
    %&\quad
    % -\sum_{i=1}^n \int_{K_i}  \left( (\tilde \rho^n)^{\gamma-1} \tilde \rho_x^{n+1} - (\tilde \rho)^{\gamma-1} \tilde \rho_x \right) \phi_x \, dx
   \\       &\leq \sum_{i=1}^n\| ((\rho^n_i)^{\gamma -1}
     +  (\rho^{n+1}_i)^{\gamma -1})| \partial_x \tilde \rho^{n+1} -
           \partial_x \tilde \rho^n |~ \phi_x  \|_{L^1(K_i)} \notag
    \\&\quad + \sum_{i=1}^n  \| |(\rho^{n}_i)^{\gamma -1}- (\rho^{n+1}_i)^{\gamma -1}|~ |\partial_x \tilde \rho^{n+1} | \, \partial_x \phi\|_{L^1(K_i)} \notag
    \\&\leq \left( \sum_{i=1}^N ((\rho^n_i)^{\gamma -1}
     +  (\rho^{n+1}_i)^{\gamma -1})^2 \|\partial_x \tilde \rho^{n+1} -
   \partial_x \tilde \rho^n\|_{L^2(K_i)}^2\right)^{1/2} \notag
    \\ &\quad + \left( \sum_{i=1}^N  ((\rho^{n}_i)^{\gamma -1}- (\rho^{n+1}_i)^{\gamma -1})^2 \|\partial_x \tilde \rho^{n+1}\|_{L^2(K_i)}^2  \right)^{1/2}. \label{eq:r1term4}
  \end{align}
  In summary, a bound of $\| \tilde R^1 \|_{H^{-1}(\rT)}$ is given by summing the terms \eqref{eq:r1part11}, \eqref{eq:r1part12}, \eqref{eq:r1term3} and \eqref{eq:r1term4}.

\subsection{Second part of the residual}
In this section, a bound of $\tilde R^2$ in $H^{-1}(\rT)$ is derived. Since the Lagrange polynomials satisfy  $\ell_0^n(t)  + \ell_1^n(t) = 1,$ it holds
  \begin{equation}
    \frac{\rho_h^{n+1} - \rho_h^n}{ \Delta t} \rvert_{K_i} = \left( \ell_0^n(t)  + \ell_1^n(t) \right)  \frac{\rho_i^{n+1} - \rho_i^n}{ \Delta t}, 
  \end{equation}
  which allows us to rewrite the second part of the residual as
  \begin{equation}\label{eq:r2rewritten}
    \tilde R^2= \frac{(\tilde \rho^{n+1} -\rho_h^{n+1}) - (\tilde \rho^n - \rho_h^n)}{ \Delta t}  + \ell_1^n(t) \frac{\rho_h^{n+1} - 2\rho_h^{n} + \rho_h^{n-1}}{\Delta t}.
  \end{equation}
  To obtain an estimate for \eqref{eq:r2rewritten} we first derive $H^{-1}(\mathbb{T})$ bounds of $\rho_h - \tilde \rho$. To this end, we use $\phi$ with  $\| \phi\|_{H^1(\rT)}\leq 1$ and compute
  \begin{align*}%\label{eq:rhohtilderhodiff}
    \int_{\rT^1} (\rho_h^n - \tilde \rho^n) \phi \, dx &= \sum_{i=1}^N \int_{x_{i-1/2}}^{x_i}  \left[ \rho_i^n - \left( \frac{x_i -x }{d_{i-1/2}} \rho_{i-1}^n + \frac{x - x_{i-1}}{d_{i-1/2}} \rho_i^n\right) \right]\phi \,dx \notag
    \\ &\quad +  \sum_{i=1}^N \int_{x_i}^{x_{i+1/2}}\left[ \rho_i^n - \left( \frac{x_{i+1} -x }{d_{i+1/2}} \rho_{i}^n + \frac{x - x_{i}}{d_{i+1/2}} \rho_{i+1}^n\right) \right]\phi \,dx.
  \end{align*}
   Using Cauchy-Schwartz's inequality we obtain
   \begin{align}\label{eq:rhohtilderhodiff1}
     \int_{\rT^1} (\rho_h^n - \tilde \rho^n) \phi \, dx &\leq  \sum_{i=1}^N \left(\int_{x_{i-1/2}}^{x_i}  \left| \rho_i^n - \rho_{i-1}^n \right|^2 \, dx\right)^{1/2} \left(\int_{x_{i-1/2}}^{x_i} \phi^2 \,dx\right)^{1/2} \notag
     \\& \quad  +  \sum_{i=1}^N \left(\int_{x_i}^{x_{i+1/2}}\left| \rho_i^n- \rho_{i+1}^n \right|^2 \, dx\right)^{1/2} \left(\int_{x_i}^{x_{i+1/2}} \phi^2 \,dx \right)^{1/2}.
   \end{align}
  Thus, using Cauchy-Schwartz's inequality once more, an $H^{-1}(\rT)$ bound for the first term in \eqref{eq:r2rewritten} is given by
  \begin{equation}\label{eq:rhohtilderhodiff2}
    \left\|\frac{\rho_h^n - \rho_h^{n+1}}{\Delta t} - \frac{\tilde \rho^n - \tilde \rho^{n+1}}{\Delta t}\right\|_{H^{-1}} \leq  \left( \sum_{i=1}^N \frac{h_i+ h_{i-1}}{2}  \left| \frac{\rho_i^n - \rho_i^{n+1}}{\Delta t} - \frac{\rho_{i-1}^n - \rho_{i-1}^{n+1}}{\Delta t} \right|^2 
    \right)^{1/2}.
  \end{equation}
  The second term in \eqref{eq:r2rewritten} is controlled using Cauchy-Schwartz's inequality by
  \begin{align}
    \sum_{i=1}^N \int_{K_i}\ell_1^n(t) \frac{\rho_i^{n+1} - 2\rho_i^{n} + \rho_i^{n-1}}{\Delta t} \, \phi \, dx \leq \left( \sum_{i=1}^N \frac{h_i}{\Delta t^2} \left[\rho_i^{n+1} - 2 \rho_i^n + \rho_i^{n-1} \right]^2\right)^{1/2}.
  \end{align}

\subsection{Third part of the residual}
  Using integration by parts testing the third part of the residual with $\phi\in H^1(\rT)$ yields 
  \begin{align}\label{eq:r3phi}
    \int_{\rT} R^3 \phi \, dx &= - \int_{\rT} (\ell_0^n \tilde \rho^{n+1} + \ell_1^n \tilde \rho^n) (\ell_0^n \partial_x \tilde c^{n+1} + \ell_1^n \partial_x\tilde c^{n}) \partial_x\phi \, dx \notag
    \\ &\quad -  \sum_{i=1}^N\left( \ell_0^n \F^n_{i+1/2} + \ell_1^n \F^{n-1}_{i+1/2}\right) \left( \frac 1 {h_i} \int_{K_i} \phi \, dx- \frac 1 {h_{i+1}} \int_{K_{i+1}} \phi \, dx \right).% \notag
  \end{align}
  We denote by $\Psi_{i+1/2}$ the unique element of $\tilde V_h$ having the value $1$ at $x_{i+1/2}$ and zero at all $x_{j+1/2}$ with $j \not= i$ and remark regarding the last factor in \eqref{eq:r3phi} that
  \begin{equation}\label{eq:diffphi}
   \frac 1 {h_{i+1}} \int_{K_{i+1}} \phi \, dx - \frac 1 {h_i} \int_{K_i} \phi \, dx 
   =- \int_\rT  \phi \partial_x \Psi_{i+1/2} \, dx
    \\= \int_\rT \Psi_{i+1/2} \, \partial_x\phi \, dx.
  \end{equation}
  Therefore \eqref{eq:r3phi} recasts as
  \begin{align}\label{eq:r3final}
  \int_\rT &\left[ \ell_0^n(\tilde \rho^n \partial_x \tilde c^n) + \ell_1^n(\tilde \rho^{n-1}\partial_x\tilde c^{n-1})  - ((\ell_0^n \tilde \rho^{n+1} + \ell_1^n \tilde \rho^n) (\ell_0^n \partial_x \tilde c^{n+1} + \ell_1^n \partial_x \tilde c^{n}))\right] \partial_x\phi \, dx \notag
    \\ - \int_\rT &\left[ \ell_0^n(\tilde \rho^n \partial_x\tilde c^n) + \ell_1^n(\tilde \rho^{n-1}\partial_x\tilde c^{n-1}) \right]  \partial_x\phi \, dx  +  \sum_{i=1}^N\left( \ell_0^n \F^n_{i+1/2} + \ell_1^n \F^{n-1}_{i+1/2}\right)  \int_\rT \Psi_{i+1/2} \, \partial_x\phi \, dx.
    %\frac 1 {\nk{h}} \int_{K_i} \int_x^{x+\nk{h}}  \phi_x(s) \, ds \, dx. \label{eq:r3estimate1}
  \end{align}
  We consider the first integral in \eqref{eq:r3final} and use Cauchy-Schwartz's inequality, the identity
  \[ (\ell_0^n(t) a_1 + \ell_1^n(t) a_2) (\ell_0^n(t) b_1 + \ell_1^n(t) b_2) - \ell_0^n(t) a_1 b_1 - \ell_1^n(t) a_2 b_2  = (a_1 - a_2)(b_1 - b_2) (\tilde t\,^2 - \tilde t) \]
  for $t \in [t^n, t^{n+1}]$ and $\tilde t = (t-t_n)/\Delta t$ and elliptic regularity in \eqref{eq:reconstructionc} to estimate
  \begin{align}\label{eq:r3mixedadvection}
    &\quad \int_\rT \left[  \ell_0^n (\tilde \rho^n \partial_x\tilde c^n - \tilde \rho^{n+1} \partial_x\tilde c^{n+1}) + \ell_1^n  (\tilde \rho^{n-1} \partial_x\tilde c^{n-1}- \tilde \rho^{n} \partial_x \tilde c^{n}) \right] \partial_x \phi \, dx \notag
    \\ & \quad +  \int_\rT \left[ \ell_0^n(\tilde \rho^{n+1}\partial_x \tilde c^{n+1}) + \ell_1^n(\tilde \rho^{n}\partial_x \tilde c^{n})  - ((\ell_0^n \tilde \rho^{n+1} + \ell_1^n \tilde \rho^n) (\ell_0^n \partial_x\tilde c^{n+1} + \ell_1^n \partial_x \tilde c^{n}))\right] \partial_x\phi \, dx \notag
    \\ &\leq  \| \tilde \rho^{n} \partial_x\tilde c^{n} - \tilde \rho^{n+1} \partial_x\tilde c^{n+1} \|_{L^2(\rT)} + \| \tilde \rho^{n-1} \partial_x\tilde c^{n-1} - \tilde \rho^{n} \partial_x\tilde c^{n} \|_{L^2(\rT)} \notag 
    \\& \quad + \| (\tilde \rho^{n+1} - \tilde \rho^n) (\partial_x\tilde c^{n+1} - \partial_x\tilde c^n) \|_{L^2(\rT)} \notag
    \\ &\leq \| \tilde \rho^{n} (\partial_x\tilde c^{n}  - \partial_x\tilde c^{n+1})  \notag \\
    &\quad + \partial_x \tilde c^{n+1}(\tilde \rho^{n} - \tilde \rho^{n+1}) \|_{L^2(\rT)} +  \| \tilde \rho^{n-1} (\partial_x\tilde c^{n-1}  - \partial_x\tilde c^{n})  +  \partial_x\tilde c^{n}(\tilde \rho^{n-1} - \tilde \rho^{n}) \|_{L^2(\rT)} \notag
    \\ &\quad + \| (\tilde \rho^{n+1} - \tilde \rho^n) (\partial_x\tilde c^{n+1} - \partial_x\tilde c^n) \|_{L^2(\rT)} \notag
    \\ &\leq \| \tilde \rho^n \|_{L^\infty(\rT)} \| \partial_x\tilde c^{n} - \partial_x\tilde c^{n+1} \|_{L^2(\rT)} + \| \partial_x\tilde c^{n+1}\|_{L^2(\rT)} \| \tilde \rho^{n} - \tilde \rho^{n+1}\|_{L^\infty(\rT)} \notag \\
    & \quad + \| \tilde \rho^{n-1} \|_{L^\infty(\rT)} \| \partial_x\tilde c^{n-1}  - \partial_x\tilde c^{n} \|_{L^2(\rT)} + \|\partial_x \tilde c^{n}\|_{L^2(\rT)} \| \tilde \rho^{n-1} - \tilde \rho^{n}\|_{L^\infty(\rT)} \notag
    \\ &\quad  + \| \tilde \rho^{n+1} - \tilde \rho^n\|_{L^\infty(\rT)}   \|\partial_x\tilde c^{n+1} - \partial_x\tilde c^n \|_{L^2(\rT)} \notag
    \\ &\leq  \left( \| \tilde \rho^n \|_{L^\infty(\rT)} + \| \tilde \rho^{n+1} \|_{L^\infty(\rT)} + \| \tilde \rho^{n+1} - \tilde \rho^n\|_{L^\infty(\rT)} \right) \| \tilde \rho^{n}  - \tilde \rho^{n+1} \|_{L^\infty(\rT)}    \notag
    \\ & \quad  + \left( \| \tilde \rho^{n-1} \|_{L^\infty(\rT)} + \| \tilde \rho^{n}\|_{L^\infty(\rT)}  \right)  \| \tilde \rho^{n-1} - \tilde \rho^{n}\|_{L^\infty(\rT)}.
  \end{align}
 In the last inequality the embedding $L^\infty \hookrightarrow L^2$ has been used. We next derive a bound for the remaining terms of \eqref{eq:r3final}. To this end, we neglect the time and remark that $\Psi_{i+1/2}$ for all $i=1,\dots,N$ form a partition of unity, which allows us to write and estimate the remainder as
  \begin{align}\label{eq:r3flux}
    \sum_{i=1}^N \int_{K_i \cup K_{i+1}} (\F_{i+1/2} - \tilde \rho \partial_x\tilde c)& \Psi_{i+1/2} \, \partial_x\phi \, dx \notag \\
    &\leq \sum_{i=1}^N \| (\F_{i+1/2} - \tilde \rho \partial_x\tilde c)\Psi_{i+1/2} \|_{L^2(K_i \cup K_{i+1})} \|\partial_x \phi\|_{L^2(K_i \cup K_{i+1})} \notag
    \\ &\leq 2 \left( \sum_{i=1}^N \| \F_{i+1/2} - \tilde \rho \partial_x \tilde c \|_{L^2(K_i \cup K_{i+1})}^2\right)^{1/2}.
    % \int_\Td &\left[ \ell_0^n(\tilde \rho^n \tilde c^n_x) + \ell_1^n(\tilde \rho^{n-1}\tilde c^{n-1}_x) \right]  \Psi_{i+1/2} \, \phi_x \, dx  +  \sum_{i=1}^N\left( \ell_0^n \F^n_{i+1/2} + \ell_1^n \F^{n-1}_{i+1/2}\right)  \int_\rT \Psi_{i+1/2} \, \phi_x \, dx
  \end{align}
Using Hölder's inequality we further estimate 
\begin{align}\label{eq:localfluxerr}
  \| \F_{i+1/2} - \tilde \rho \partial_x\tilde c\|_{L^2(K_i)}^2  &\leq  \| g(\rho_i, \rho_{i+1}) \partial_x  c_h - \tilde \rho \partial_x \tilde c(x_{i+1/2})\|^2_{L^2(K_i)}
  \notag
\\ &\leq  \|\partial_x  c_{h}\|^2_{L^2(K_i)} \| g(\rho_i, \rho_{i+1}) - \tilde \rho \|^2_{L^\infty(K_i)}  + \| \tilde \rho \|_{L^{\infty}(K_i)}^2 \| \partial_x (c_h - \tilde c) \|_{L^2(K_i)}^2 \notag
  \\  &\leq  \|\partial_x  c_{h}\|^2_{L^2(K_i)} \max_{\ell \in \{ i, i+1\}} \| \rho_\ell - \tilde \rho \|^2_{L^\infty(K_i)} + \| \tilde \rho \|_{L^{\infty}(K_i)}^2 \| \partial_x (c_h - \tilde c) \|_{L^2(K_i)}^2, 
\end{align}
where $g(\rho_i, \rho_{i+1})=\rho_i$ if $\partial_x c_h(x_{i+1/2})>0$ and $g(\rho_i, \rho_{i+1})=\rho_{i+1}$ otherwise. Similarly, the following bound holds
\begin{align}\label{eq:localfluxerr2}
  \| \F_{i+1/2} - \tilde \rho \partial_x \tilde c\|_{L^2(K_{i+1})}^2 &\leq \|\partial_x  c_{h}\|^2_{L^2(K_{i+1})} \max_{\ell \in \{ i, i+1\}} \| \rho_\ell - \tilde \rho \|^2_{L^\infty(K_{i+1})} \notag \\
  & \quad + \| \tilde \rho \|_{L^{\infty}(K_{i+1})}^2 \| \partial_x (c_h - \tilde c) \|_{L^2(K_{i+1})}^2
\end{align}
and thus combining \eqref{eq:r3mixedadvection}, \eqref{eq:r3flux}, \eqref{eq:localfluxerr}, and \eqref{eq:localfluxerr2}
with the a posteriori bound 
\[ \| c_h^n - \tilde c(t^n) \|_{H^1(\rT^1)}^2
 \leq \sum_{i=1}^N d_{i+1/2}^2 \| c_h^n - \tilde \rho^n\|_{L^2(x_i,x_{i+1})}^2 
 + \sum_{i=1}^N h_i (\partial_x c_h^n(x_i^+) - 
\partial_x c_h^n(x_i^-))^2, \]
see \cite{Verfurth_book},
we obtain a computable bound for the second part of \eqref{eq:r3final}.

\section{Residual estimates for the 2D scheme}\label{sec:2D}
In this section we discuss an adaptation of the above residual estimates for the 2D scheme~\eqref{eq:scheme2d}.

\subsection{First part of the residual} Proceeding in analogy to \eqref{eq:localizedresidual} the first part of the residual takes the form
\begin{align}
\tilde R^1 &=  \ell_0^n \Delta_{x,h}^\gamma[\rho_h^n, \rho_h^{n+1}] + \ell_1^n \Delta_{x,h}^\gamma[\rho_h^{n-1}, \rho_h^{n}]  - \Delta_x \tilde \rho^\gamma  \notag
\\ &\quad +   \ell_0^n \Delta_{y,h}^\gamma[\rho_h^n, \rho_h^{n+1}] + \ell_1^n \Delta_{y,h}^\gamma[\rho_h^{n-1}, \rho_h^{n}]  - \Delta_y \tilde \rho^\gamma .
\end{align}
The constituents with respect to the $x$- and to the $y-$derivative each allow for a reformulation analogous to \eqref{eq:R1largegamma1}. The terms accounting for the time discretization are estimated as in the one-dimensional case, i.e., the bounds \eqref{eq:r1term3} and \eqref{eq:r1term4} hold. The terms accounting for the approximation of the nonlinear diffusion terms are also estimated analogously as in the one-dimensional scheme: considering the $x$-derivative we obtain
  \begin{align}
    &\int_{\rT^2} \left( \Delta_{x,h}^\gamma[\rho_h^n, \rho_h^{n+1}] - \gamma \partial_x \left[(\tilde \rho^{n})^{\gamma-1}\partial_x\tilde \rho^{n+1} \right] \right) \phi\, dx \notag
    \\ &= \sum_{j, k}   \mathcal{D}_{j+1/2,k}^{n,n+1}\left( \frac{1}{h}\int_{K_{j,k}} \phi \, dx  - \frac{1}{h}\int_{K_{j+1,k}} \phi \, dx \right) \notag \\
    &\quad +  \sum_{j, k} \int_{y_{k-1/2}}^{y_{k+1/2}} \int_{x_j}^{x_{j+1}} \gamma (\hat \rho_{j+1/2,k}^n)^{\gamma-1} \partial_x \tilde \rho^{n+1} \partial_x \phi \, dx dy \notag
    \\ &\quad + \sum_{j, k} \int_{y_{k-1/2}}^{y_{k+1/2}} \int_{x_j}^{x_{j+1}} \gamma \left( (\tilde \rho^n)^{\gamma -1} - (\hat \rho_{j+1/2,k}^n)^{\gamma-1} \right) \partial_x \tilde \rho^{n+1}  \partial_x\phi \, dx \, dy. \label{eq:flux2de1}
  \end{align}
  The last term occurring in \eqref{eq:flux2de1} is bounded by
  \begin{align}
    \sum_{j, k} &\int_{y_{k-1/2}}^{y_{k+1/2}} \int_{x_j}^{x_{j+1}} \gamma \left( (\tilde \rho^n)^{\gamma -1} - (\hat \rho_{j+1/2, k}^n)^{\gamma-1} \right) \partial_x \tilde \rho^{n+1}  \partial_x\phi \, dx \, dy \notag
    \\  &\leq \sum_{j, k} \gamma  \| (\tilde \rho^n)^{\gamma -1} - (\hat \rho_{j+1/2, k}^n)^{\gamma-1} \|_{L^\infty(\tilde K_{j,k})} \|  \partial_x \tilde \rho^{n+1} \|_{L^2(\tilde K_{j,k})} \|\phi \|_{H^1(\tilde K_{j,k})} \notag
    \\ &\leq  \gamma \left( \sum_{j,k}  \| (\tilde \rho^n)^{\gamma -1} - (\hat \rho_{j+1/2, k}^n)^{\gamma-1} \|_{L^\infty(\tilde K_{j,k})}^2  \| \partial_x \tilde \rho^{n+1} \|_{L^2(\tilde K_{j,k})}^2
         \right)^{1/2},  
  \end{align}
  where $\tilde K_{j,k} = [x_{j}, x_{j+1}]\times[y_{k-1/2}, y_{k+1/2}]$ refers to a shifted version of the cell $K_{j,k}$ in $x-$direction.
  For estimating the remainder of \eqref{eq:flux2de1}, we define
  $\chi_j=\chi_j(x)$ as $1/h$ in $[x_{j-1/2},x_{j+1/2}]$ and zero outside of this interval.
  Then, the first two terms in  \eqref{eq:flux2de1} equal 
  \begin{align}
    \int_{\rT^2} \sum_{j, k} &\left(    \chi_k(y) \Psi_{j+1/2}(x) \mathcal{D}^{n,n+1}_{j+1/2,k}
    - \chi_k(y)\chi_{j+1/2}(x) \mathcal{D}^{n,n+1}_{j+1/2,k}\right)  \partial_x \phi \notag \\
    &\leq \frac{1}{2}\sum_{j,k} \left( h^2 |\mathcal{D}^{n,n+1}_{j+1/2,k} - \mathcal{D}^{n,n+1}_{j-1/2,k}|^2\right)^{1/2} 
    \| \partial_x \phi\|_{L^2(K_{jk})} \notag \\
   & \leq \frac{h}{2} \left( \sum_{j,k} |\mathcal{D}^{n,n+1}_{j+1/2,k} - \mathcal{D}^{n,n+1}_{j-1/2,k}|^2\right)^{1/2}.
    \end{align}
%   \begin{align}
%     \sum_{j, k}   &\mathcal{D}_{j+1/2,k}^{n,n+1}\left( \frac{1}{h} \int_{K_{j,k}} \phi \, dx  - \frac{1}{h}\int_{K_{j+1,k}} \phi \, dx \right) +  \sum_{j, k} \int_{y_{k-1/2}}^{y_{k+1/2}} \sum_{\ell=1}^m \mathcal{D}_{j+1/2,\ell}^{n,n+1} \Psi_\ell(y) (\phi(x_{i+1}, y) - \phi(x_i, y)) \, dy \notag
%     \\ &\leq \sum_{j,k} \left( \mathcal{D}_{j+1/2,k}^{n,n+1} -  \mathcal{D}_{j-1/2,k}^{n,n+1}\right) \frac{1}{h} \int_{x_{j-1/2}}^{x_{j+1/2}} \int_{y_{k-1/2}}^{y_{k+1/2}} (\phi(x, y) - \phi(x_i,y)) \, dx dy \notag
%     \\ &\quad - \sum_{j, k, \ell} \int_{y_{k-1/2}}^{y_{k+1/2}} \left(\mathcal{D}_{j+1/2,k}^{n,n+1} - \mathcal{D}_{j+1/2,\ell}^{n,n+1}\Psi_\ell(y) \right) (\phi(x_{i+1}, y) - \phi(x_i, y)) \, dy \notag
%     \\ &\leq h \left( \sum_{j,k} \left(\mathcal{D}_{j+1/2,k}^{n,n+1} -  \mathcal{D}_{j-1/2,k}^{n,n+1}  \right)^2 \right)^{1/2}  + \frac{h}2 \left( \sum_{j,k} \max_{\ell \in \{k-1, k+1\}}  \left|\mathcal{D}_{j+1/2,k}^{n,n+1} -  \mathcal{D}_{j+1/2,\ell}^{n,n+1}  \right|^2 \right)^{1/2}
%   \end{align}
 % with $\Psi_\ell$ for $1\leq \ell \leq m$ denoting the linear basis function with respect to the points $y_1, \dots, y_m$.

\subsection{Second part of the residual} The second part of the residual takes also in case of the 2D scheme the form  \eqref{eq:r2rewritten}. To obtain a computable bound we estimate $\rho_h - \tilde \rho$ on the Cartesian grid. To this end we decompose all mesh cells as
  \begin{align*}
    K_{j,k} &= [x_{j-1/2}, x_j] \times [y_{k-1/2}, y_k] \cup [x_{j}, x_{j+1/2}] \times [y_{k-1/2}, y_k]
    \\ & \quad \cup [x_{j-1/2}, x_j] \times [y_{k}, y_{k+1/2}] \cup [x_{j}, x_{j+1/2}] \times [y_{k}, y_{k+1/2}]
         \\ & \eqqcolon  K_{j,k}^\text{SW} \cup K_{j,k}^\text{SE} \cup K_{j,k}^\text{NW} \cup K_{j,k}^\text{NE} \label{eq:partition}
  \end{align*}
then, neglecting the time index $n$, the piecewise bilinear function $\tilde \rho$ takes the form 
\begin{align*}
  \tilde \rho(x,y) \rvert_{K_{j,k}^\text{SW}} &= \left( 1 + \frac{x-x_j}{h} + \frac{y-y_k}{h}  \right) \, \rho_{j,k} - \frac{x-x_j}{h} \rho_{j-1,k} - \frac{y-y_k}{h} \rho_{j,k-1},
 \\ \tilde \rho(x,y) \rvert_{K_{j,k}^\text{SE}} &= \left( 1 - \frac{x-x_j}{h} + \frac{y-y_k}{h}  \right) \, \rho_{j,k} + \frac{x-x_j}{h} \rho_{j+1,k} - \frac{y-y_k}{h} \rho_{j,k-1},
 \\ \tilde \rho(x,y) \rvert_{K_{j,k}^\text{NW}} &= \left( 1 + \frac{x-x_j}{h} - \frac{y-y_k}{h}  \right) \, \rho_{j,k} - \frac{x-x_j}{h} \rho_{j-1,k} + \frac{y-y_k}{h} \rho_{j,k+1},
 \\ \tilde \rho(x,y) \rvert_{K_{j,k}^\text{NE}} &= \left( 1 - \frac{x-x_j}{h} - \frac{y-y_k}{h}  \right) \, \rho_{j,k} + \frac{x-x_j}{h} \rho_{j+1,k} + \frac{y-y_k}{h} \rho_{j,k+1}.
\end{align*}
Assuming a constant test function $\phi\equiv \phi_{j,k}$ in $K_{j,k}$ we note that
\begin{align*}
  \int_{K_{j,k}^\text{SW}} (\rho_h - \tilde \rho) \phi_{j,k} \, dx &= \frac{h}{2} \int_{x_{j-1/2}}^{x_j} \frac{x - x_j}{h} (\rho_{j-1,k} - \rho_{j,k})  \phi_{j,k} \, dx \\
  &\quad + \frac{h}{2} \int_{y_{k-1/2}}^{y_k} \frac{y - y_k}{h} (\rho_{j,k-1} - \rho_{j,k})  \phi_{j,k} \, dy
  \\ &= \frac{h^2}{16} \left[\rho_{j,k} - \rho_{j-1,k}\right]  \phi_{j,k}  + \frac{h^2}{16} \left[\rho_{j,k} - \rho_{j,k-1}\right]  \phi_{j,k}.
\end{align*}
Similar identities hold when integrating over $ K_{j,k}^\text{SE}$, $K_{j,k}^\text{NW}$ and $K_{j,k}^\text{NE}$ giving rise to
\begin{align}
  \sum_{j,k} \int_{K_{j,k}} &(\rho_h - \tilde \rho) \phi_{j,k} \, dx = \sum_{j,k} \frac{h^2}{32} \left[-\rho_{j-1,k} - \rho_{j, k-1} + 4 \rho_{j,k} - \rho_{j,k-1} - \rho_{j-1,k} \right]  \phi_{j,k} \notag
                                                                         \\ &\leq \sum_{j,k} \left( \frac{h^2}{1024} \left[-\rho_{j-1,k} - \rho_{j, k-1} + 4 \rho_{j,k} - \rho_{j,k-1} - \rho_{j-1,k} \right]^2 \right)^{1/2} \coloneqq h \, P[\rho_h].
\end{align}
Focussing again on the south west part of the mesh cells we estimate the remainder term 
\begin{align}
  \sum_{j,k} \int_{K_{j,k}^\text{SW}} &(\rho_h -\tilde  \rho) (\phi - \phi_{j,k}) \, dx \notag
  \\ &\leq \sum_{j,k} \left\| \frac{x - x_j}{h} (\rho_{j-1,k} - \rho_{j,k})  +  \frac{y - y_k}{h} (\rho_{j,k-1} - \rho_{j,k}) \right\|_{L^2(K_{j,k})}  \|\phi - \phi_{j,k} \|_{L^2(K_{j,k})} \notag
  \\ &\leq \sum_{j,k} h^{2} \left(|\rho_{j-1,k} - \rho_{j,k}| + |\rho_{j,k-1} - \rho_{j,k}| \right) \| \phi \|_{H^1(K_{j,k})} \notag
  \\ &\leq \left( \sum_{j,k} h^{4} \left(|\rho_{j-1,k} - \rho_{j,k}| + |\rho_{j,k-1} - \rho_{j,k}| \right)^2 \right)^{1/2} \eqqcolon h^2 \, Q_\text{SW}[\rho_h].
\end{align}
We note that similar estimates hold, when focussing on any of the other subcells in \eqref{eq:partition}.
Eventually the first term in \eqref{eq:r2rewritten} can be bounded in $H^{-1}(\rT^2)$ by
  \begin{multline}
       \frac{h}{\Delta t} P[\rho_h^{n+1}-\rho_h^n] + \frac{h^2}{\Delta t} \left( Q_\text{SW}[\rho_h^{n+1}-\rho_h^n] + Q_\text{SE}[\rho_h^{n+1}-\rho_h^n] \right.\\ \left.
    + Q_\text{NW}[\rho_h^{n+1}-\rho_h^n] + Q_\text{NE}[\rho_h^{n+1}-\rho_h^n]\right).
      \end{multline}

      \subsection{Third part of the residual} Following the computation \eqref{eq:localizedresidual} in case of the 2D scheme we obtain as  third part of the residual 
\begin{align}\label{eq:r32d}
\tilde R^3 &= \partial_x ((\ell_0^n \tilde \rho^{n+1} + \ell_1^n \tilde \rho^n) (\ell_0^n \partial_x \tilde c^{n+1} + \ell_1^n \partial_x \tilde c^{n}))       - \frac{\ell_0^n}{h}  d_x\mathcal{F}_{h}^{n}  \notag
               - \frac{\ell_1^n}{h} d_x \mathcal{F}_{h}^{n-1} \\
  &\quad +\partial_y ((\ell_0^n \tilde \rho^{n+1} + \ell_1^n \tilde \rho^n) (\ell_0^n  \partial_y \tilde c^{n+1} + \ell_1^n \partial_y \tilde c^{n})) - \frac{\ell_0^n}{h} d_y  \mathcal{F}_{h}^{n}
               - \frac{\ell_1^n}{h} d_y  \mathcal{F}_{h}^{n-1} .
\end{align}
where $d_x \mathcal{F}_{h}^{n-1}$ and $d_y \mathcal{F}_{h}^{n-1}$ are defined analogously to the 1D case.
After testing with $\phi\in H^{-1}(\rT^2)$ the residual part \eqref{eq:r32d} can be split into components accounting for the $x$- and the $y$- derivative respectively, each of which can be rewritten in analogy to \eqref{eq:r3phi}. For the first part accounting for the mixed time instances in the advection term the bound \eqref{eq:r3mixedadvection} holds in 2D analogously. A computation analogue to \eqref{eq:diffphi} shows that
  \begin{equation}
 \frac 1 {h} \left(\int_{K_{j+1,k}} \phi \, dx- \int_{K_{j,k}} \phi \, dx\right) = \int_{y_{k-1/2}}^{y_{k+1/2}} \int_{x_{j-1/2}}^{x_{j+3/2}} \Psi_{j+1/2}(x) \, \ddx \phi \, dx \, dy.
\end{equation}
We thus estimate the second part of the 2D equivalent of \eqref{eq:r3phi} corresponding to the $x$-derivative by 
\begin{align}
  \sum_{j,k} \int_{y_{k-1/2}}^{y_{k+1/2}} \int_{x_{j-1/2}}^{x_{j+3/2}} &\left( \F_{j+1/2, k} - \tilde \rho \ddx \tilde c\right) \Psi_{j+1/2}(x) \ddx \phi \, dx \, dy \notag
   \\ &\leq 2 \left( \sum_{j,k} \| \F_{j+1/2,k} - \tilde \rho \ddx \tilde c \|_{L^2(K_{j,k} \cup K_{j+1,k})}^2\right)^{1/2}.
\end{align}
Replicating the computations in \eqref{eq:localfluxerr} and \eqref{eq:localfluxerr2}, we obtain 
\begin{align}\label{eq:localfluxerr2d}
  \| \F_{j+1/2, k} - \tilde \rho \tilde c_x&\|_{L^2(K_{j,k} \cup K_{j+1,k})}^2  
   \leq  \|\partial_x  c_{h}\|^2_{L^2(K_{j,k} \cup K_{j+1,k})} \max_{\ell \in \{ j, j+1\}} \| \rho_{\ell,k} - \tilde \rho \|^2_{L^\infty(K_{j,k} \cup K_{j+1,k})} \notag \\
   &\quad +  \| \tilde \rho \|_{L^{\infty}(K_{j,k} \cup K_{j+1,k})}^2 h^2 \| \partial_x c_h - \partial_x c_h (x_{j+1/2},y_k) \|_{L^\infty(K_{j,k} \cup K_{j+1,k})}^2\notag\\
      &\quad + \| \tilde \rho \|_{L^{\infty}(K_{j,k} \cup K_{j+1,k})}^2 \| \partial_x (c_h - \tilde c) \|_{L^2(K_{j,k} \cup K_{j+1,k})}^2, 
\end{align}
where the last term is controlled employing the a posteriori bound 
\begin{equation}\label{eq:ellipticestimate}
  \| c_h^n - \tilde c(t^n) \|_{H^1(\rT^2)}^2
 \leq \sum_{T \in \cT_h}^N h^2 \| c_h^n - \tilde \rho^n\|_{L^2(T)}^2 
 + \sum_{E_{T|V} \in \mathcal{E}_h}  h \, |E_{T|V}| \, \left((\nabla c_h^n\rvert_T - 
   \nabla c_h^n\rvert_V) \eta_{T|V}\right)^2
\end{equation}
from \cite{Verfurth_book} with $\mathcal{E}_h$ denoting the set of edges within $\cT_h$, $E_{T|V}$ the edge between $T$ and $V$ and $\eta_{T|V}$ the outer normal vector of $T$ towards $V$.

% the following local approximation error of the numerical fluxes
% \begin{align}\label{eq:localfluxerr2d}
%   \| \F_{j+1/2, k} - \tilde \rho \tilde c_x\|_{L^2(K_{j,k} \cup K_{j+1,k})}^2 &\leq h^2 \,  \tilde c_{x}(x_{j+1/2}, y_k)^2  \max_{\ell \in \{ j, j+1\}} \| \rho_{\ell, k} - \tilde \rho \|^2_{L^\infty(K_{j,k} \cup K_{j+1,k})} \notag
% \\ &\quad   + h^2 \left( \| \tilde \rho \|_{L^{\infty}(K_{j,k})}^2   \| \tilde \rho\|_{L^2(K_{j,k})}^2 + \| \tilde \rho \|_{L^{\infty}(K_{j+1,k})}^2   \| \tilde \rho\|_{L^2(K_{j+1,k})}^2 \right).
% \end{align}

\section{Numerical experiments}
To illustrate the behavior of the residual, and thus of the error estimator, we present three numerical experiments using the 2D scheme on the domain $\mathbb{T}^2$. We consider scenarios which differ with respect to the cell migration and set $\gamma=1$ in the first, $\gamma=1.5$ in the second and $\gamma=2$ in the third experiment.
The initial cell density is set to a modified Gaussian function centered at the point $\left(\frac 12, \frac 12\right)$ in detail given by
\begin{equation}
  \rho_0(x, y) = 1.3 \, \sin(\pi x) \, \sin(\pi y) \, e^{-25 (x-\frac 12 )^2 -25 (y - \frac 12)^2}.
\end{equation}
Numerical solutions are computed up to the final time $T=5 \times 10^{-3}$ using Cartesian meshes consisting of $n \times n$ cells and the uniform mesh resolution dependent step sizes $\Delta t = \frac{T}{n}$. During the simulation we verify that by this choice the CFL condition in Theorem~\ref{thm:positivity} remains satisfied and thus the numerical approximation preserves its initial positivity. For efficiency we employ mass lumping in \eqref{eq:cfem} to compute the chemical attractant $c_h$, which gives rise to linear systems that can be solved using a fast Fourier transform of $\rho_h$. We note that also using mass lumping an a posteriori estimate of the form \eqref{eq:ellipticestimate} holds, cf. \cite{Gupta2017}.

\begin{figure}
 \centering
 \includegraphics[scale=0.95]{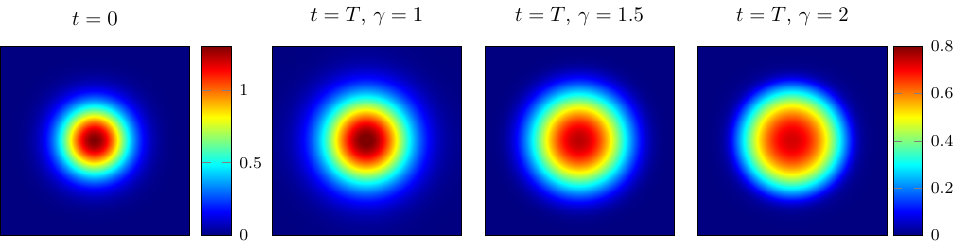}
\caption{Cell density $\rho$ over $\rT^2$ at initial and final time $T=5\times 10^{-3}$ for Experiments 1--3 computed using the 2D scheme on a mesh of $200 \times 200$ cells.}\label{fig:densities}
\end{figure}
The qualitative behavior of the numerical solution with respect to the choice of $\gamma$ is shown in Figure~\ref{fig:densities}. As $\gamma$ increases lower cell concentrations spread slower over the domain, while the diffusion of higher concentrations occurs with higher speed.  

To verify the conditions \eqref{eq:classicstabcondition} and \eqref{eq:stabilitycondition} that ensure validity of our a posteriori error estimate we compute numerical estimates of the embedding constants occurring in \eqref{eq:a} taking into account the computational domain and our choice of $\gamma$. Using the result in \cite[Lemma 2.3]{cai2012fitzhnagum} we obtain $C_S \approx 2.1358$. Combining the embeddings from $H^2$ to $W^{1,6}$ and from $W^{1,6}$ to $L^\infty$ employing \cite[Theorem 3.4]{mizuguchi2017estimsobol} we compute $C_S^\prime \approx 7.6112$. Lastly, by using the embedding $H^2 \hookrightarrow W^{1,\frac{2 \gamma + 2}{\gamma -1}}$ and \cite[Theorem 3.3]{mizuguchi2017estimsobol} we obtain $\tilde C_S^2\approx 5.2494$ in case of experiment 2 and $\tilde C_S^3\approx 3.9228$ in case of experiment 3. 
\begin{figure}
  \centering
  \includegraphics[scale=0.94]{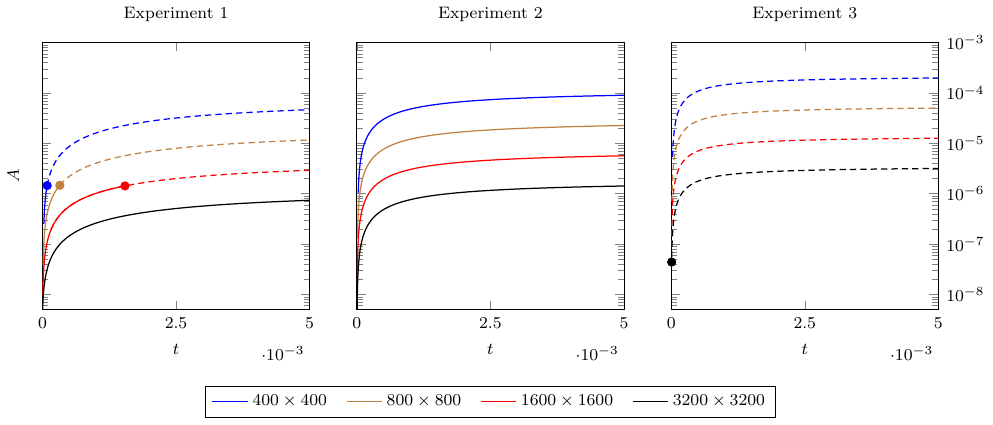}
\caption{The residual $A$ over the time interval $[0,0.005]$ in Experiment 1 ($\gamma=1$), 2 ($\gamma=1.5$) and 3 ($\gamma=2$) for the 2D scheme using the mesh resolutions $400 \times 400$, $800 \times 800$, $1600 \times 1600$ and $3200 \times 3200$. Dashed lines indicate the regime, in which the stability condition \eqref{eq:classicstabcondition} for Experiment~1 and  \eqref{eq:stabilitycondition} for Experiments~2--3 is not satisfied. Note that our residuals are valid for longer times in Experiment 2 than in Experiment 1 due to different exponents in the respective stability conditions.}\label{fig:residuals}
\end{figure}

For all three experiments Figure~\ref{fig:residuals} presents the total residual
\begin{equation}\label{eq:totalresidual}
  A(t)= z_1(0)  + \int_0^{T} \| R_\rho\|_{H^{-1}}^2  dt                                                          
\end{equation}
over the considered time interval for various mesh resolutions ranging from $n=400$ to $n=3200$. The total residual is clearly reduced as finer meshes are considered, whereas an increase with the choice of $\gamma$ is observed. In Experiment~1 the stability condition \eqref{eq:classicstabcondition} remains satisfied over the full time interval for mesh resolution $n=3200$. Also shorter time computations using coarser meshes of resolution $n=400$, $n=800$ or $n=1600$ can be conducted without violating the stability conditions. The condition \eqref{eq:stabilitycondition} allows for significantly larger total residuals in Experiment~2 than in Experiment~1 as it remains satisfied over the full time interval for all shown mesh resolutions. In case of $\gamma=2$ in Experiment 3 even finer mesh resolutions than the considered ones are necessary to satisfy condition \eqref{eq:stabilitycondition}. This is due to the larger exponent $\frac{1}{\beta}$, which in this experiment requires a total residual of approximate magnitude $10^{-7}$.

\begin{table}\small
  \centering
  \caption{Mesh convergence for the total and the restricted residuals given by \eqref{eq:totalresidual} and \eqref{eq:restricted} in case of the classical Keller--Segel model in Experiment 1.}\label{tab:exp1}
  \begin{tabular}{rlrlrlrlr}
    \toprule
    mesh & $A^1$ & EOC & $A^2$ & EOC & $A^3$ & EOC & $A$ & EOC\\ \midrule
    $100$ & \(6.085 \times 10^{-4}\) & & \(1.766 \times 10^{-4}\) & &
\(3.629 \times 10^{-6}\) & & \(7.888 \times 10^{-4}\) & \\
$200$ & \(1.538 \times 10^{-4}\) & 1.98 & \(3.326 \times 10^{-5}\) & 2.41
& \(9.207 \times 10^{-7}\) & 1.98 & \(1.880 \times 10^{-4}\) & 2.07 \\
$400$ & \(3.860 \times 10^{-5}\) & 1.99 & \(7.656 \times 10^{-6}\) & 2.12
& \(2.314 \times 10^{-7}\) & 1.99 & \(4.648 \times 10^{-5}\) & 2.02 \\
$800$ & \(9.680 \times 10^{-6}\) & 2.00 & \(1.884 \times 10^{-6}\) & 2.02
& \(5.804 \times 10^{-8}\) & 2.00 & \(1.162 \times 10^{-5}\) & 2.00 \\
$1600$ & \(2.424 \times 10^{-6}\) & 2.00 & \(4.762 \times 10^{-7}\) & 1.98
& \(1.453 \times 10^{-8}\) & 2.00 & \(2.915 \times 10^{-6}\) & 2.00 \\
$3200$ & \(6.065 \times 10^{-7}\) & 2.00 & \(1.298 \times 10^{-7}\) & 1.88
& \(3.636 \times 10^{-9}\) & 2.00 & \(7.399 \times 10^{-7}\) & 1.98\\ \bottomrule
  \end{tabular}
\end{table}

\begin{table}\small
  \centering
  \caption{Mesh convergence for the total and the restricted residuals given by \eqref{eq:totalresidual} and \eqref{eq:restricted} in case of the power-law Keller--Segel model in Experiment 2.}\label{tab:exp2}
  \begin{tabular}{rlrlrlrlr}
    \toprule
    mesh & $A^1$ & EOC & $A^2$ & EOC & $A^3$ & EOC & $A$ & EOC\\ \midrule
    $100$ & \(7.794 \times 10^{-4}\) & & \(6.951 \times 10^{-4}\) & &
\(4.562 \times 10^{-6}\) & & \(1.479 \times 10^{-3}\) & \\
$200$ & \(1.983 \times 10^{-4}\) & 1.97 & \(1.606 \times 10^{-4}\) & 2.11
& \(1.166 \times 10^{-6}\) & 1.97 & \(3.601 \times 10^{-4}\) & 2.04 \\
$400$ & \(4.995 \times 10^{-5}\) & 1.99 & \(3.966 \times 10^{-5}\) & 2.02
& \(2.943 \times 10^{-7}\) & 1.99 & \(8.990 \times 10^{-5}\) & 2.00 \\
$800$ & \(1.255 \times 10^{-5}\) & 1.99 & \(9.929 \times 10^{-6}\) & 2.00
& \(7.393 \times 10^{-8}\) & 1.99 & \(2.255 \times 10^{-5}\) & 2.00 \\
$1600$ & \(3.145 \times 10^{-6}\) & 2.00 & \(2.491 \times 10^{-6}\) & 2.00
& \(1.853 \times 10^{-8}\) & 2.00 & \(5.654 \times 10^{-6}\) & 2.00 \\
$3200$ & \(7.870 \times 10^{-7}\) & 2.00 & \(6.312 \times 10^{-7}\) & 1.98
& \(4.637 \times 10^{-9}\) & 2.00 & \(1.423 \times 10^{-6}\) & 1.99\\ \bottomrule
  \end{tabular}
\end{table}

\begin{table}\small
  \centering
  \caption{Mesh convergence for the total and the restricted residuals given by \eqref{eq:totalresidual} and \eqref{eq:restricted} in case of the power-law Keller--Segel model in Experiment 3.}\label{tab:exp3}
  \begin{tabular}{rlrlrlrlr}
    \toprule
    mesh & $A^1$ & EOC & $A^2$ & EOC & $A^3$ & EOC & $A$ & EOC\\ \midrule
    $100$ & \(9.221 \times 10^{-4}\) & & \(2.121 \times 10^{-3}\) & &
\(5.473 \times 10^{-6}\) & & \(3.048 \times 10^{-3}\) & \\
$200$ & \(2.368 \times 10^{-4}\) & 1.96 & \(5.383 \times 10^{-4}\) & 1.98
& \(1.414 \times 10^{-6}\) & 1.95 & \(7.765 \times 10^{-4}\) & 1.97 \\
$400$ & \(5.995 \times 10^{-5}\) & 1.98 & \(1.375 \times 10^{-4}\) & 1.97
& \(3.589 \times 10^{-7}\) & 1.98 & \(1.978 \times 10^{-4}\) & 1.97 \\
$800$ & \(1.509 \times 10^{-5}\) & 1.99 & \(3.488 \times 10^{-5}\) & 1.98
& \(9.041 \times 10^{-8}\) & 1.99 & \(5.006 \times 10^{-5}\) & 1.98 \\
$1600$ & \(3.787 \times 10^{-6}\) & 1.99 & \(8.792 \times 10^{-6}\) & 1.99
& \(2.269 \times 10^{-8}\) & 1.99 & \(1.260 \times 10^{-5}\) & 1.99 \\
$3200$ & \(9.483 \times 10^{-7}\) & 2.00 & \(2.215 \times 10^{-6}\) & 1.99
& \(5.682 \times 10^{-9}\) & 2.00 & \(3.169 \times 10^{-6}\) & 1.99 \\ \bottomrule
  \end{tabular}
\end{table}

We moreover show the behavior of the total as well as of the restricted residuals
\begin{equation}\label{eq:restricted}
    A^j(t)= z_1(0)  + \int_0^{T} \| \tilde R^j \|_{H^{-1}}^2  dt, \qquad j\in \{1,2,3\}
\end{equation}
as the mesh is refined in Tables~\ref{tab:exp1}--\ref{tab:exp3}. To analyze mesh convergence we compute the according experimental order of convergence (EOC)\footnote{The EOC is computed by the formula $\text{EOC}=\log_2(A_1/A_2)$ with $A_1$ and $A_2$ denoting the residual in two consecutive lines of the table.}. Throughout the experiments and residual components the results indicate a second order convergence (of the squared $L^2(0,T; H^{-1}(\rT^d))$ norms of all components of the residual and, thus, of the squared discretisation error) with respect to the mesh parameter $n$, which also affects the time step. In addition, the tables show that the first and the second part of the residual resulting from the finite volume discretization of the diffusion and the time discretization are considerably larger than the third part that is due to the discretization of advection.

\section*{Funding}
J.G. is grateful for financial support by the German Science Foundation (DFG) via grant TRR~154 (\emph{Mathematical modelling, simulation and optimization using the example of gas networks}), project C05. The work of J.G. is also supported by the Graduate School CE within Computational Engineering at Technische Universität Darmstadt.
N.K. thanks the German Science Foundation (DFG) for the financial support through project 461365406 and 320021702/GRK2326.
\bibliographystyle{abbrvurl} 
\bibliography{references.bib}
%\bibliography{/Users/niklas/org/bib/global.bib,references.bib}

\end{document}